\documentclass{article}

\textwidth=15.2 true cm
\textheight=22 true cm
\voffset=-1.05 true cm
\hoffset = -1.75 true cm

\usepackage{amssymb,amsfonts,amsmath,latexsym,color,amscd} 
\usepackage{url}

\usepackage[dvips]{psfrag,graphicx} 

\usepackage{cases}

\newtheorem{theorem}{Theorem}[section]
\newtheorem{lemma}[theorem]{Lemma}
\newtheorem{corollary}[theorem]{Corollary}

\newtheorem{definition}[theorem]{Definition}

\newenvironment{proof}{{\par\addvspace{0.1cm}\noindent \bf Proof. }}{\hfill$\Box$\par\medskip} 

\newtheorem{conjecture}[theorem]{Conjecture}
\newtheorem{remark}[theorem]{Remark}

%\numberwithin{equation}{section}

\setlength\arraycolsep{1pt}

\numberwithin{equation}{section}

%%%%%%%%%%%%% new commands %%%%%%%%%%%%%
\def\a{\alpha}
\def\e{\varepsilon}
\def\R{\Re\mathfrak{e} \,}%\def\R{\Re\mbox{{\rm e}} \,}
\def\vect#1{\mbox{\boldmath $#1$}} % to make symbols in boldmath to express vectors 
\def\RR{\mathbb{R}}

\def\NN{\mathbb N}
\def\ZZ{\mathbb Z}
\def\Pf{\mbox{\rm Pf.}}

\def\quack{\hspace{0.4cm}}

\newcommand\bcdot{\boldsymbol{\cdot}}
%
% Notation
%

%%%%%%%%%%%%%%%%%%%%%%%%%%%%%%%%%%%%%%%%%%%%%%%%%%%%%%

%%%%%%%%%%%%%%%%%%%%%%%%%%%%%%%%%%%%%%

%%%%%%%%%%%%%%%%%%%%%%%%%%%%%%%%%%%%%%%%%%%%%%%%%%%%%%

\def\jb#1{\textcolor[named]{Black}{{#1}}} 
\def\j#1{\textcolor[named]{Black}{ {#1}}} 

%%%%%%%%%%%%%%%%%%%%%%%%%%%%%%%%%%%%%%%%%%%%%%%%%%%%%%

\begin{document}

\title{Self-repulsiveness of energies for closed submanifolds}

\author{Jun O'Hara\footnote{Supported by JSPS KAKENHI Grant Number 19K03462.}}
%
%\date{}
%
\maketitle

\begin{abstract}
We show that the regularized Riesz $\a$-energy for \j{smooth} closed submanifolds $M$ in $\RR^n$ blows up as $M$ degenerates to have double points if $\a\le-2\dim M$. 
This gives theoretical foundation of numerical experiments to evolve surfaces to decrease the energy which have been carried out since \j{the} 90's. 
\end{abstract}

\medskip{\small {\it Keywords:} energy, surface, self-repulsive}

{\small 2010 {\it Mathematics Subject Classification:} 53C45, 57R99.}%52A38, 

%%%%%%%%%%%%%%%%%%%%%%%%%%%%%%%%%%%%%%%%%%%%%%%%%%%%%%%%%%%%%%%%%%%%%%%%%%%%%%%%%%%%%
\section{Introduction}
%%%%%%%%%%%%%%%%%%%%%%%%%%%%%%%%%%%%%%%%%%%%%%%%%%%%%%%%%%%%%%%%%%%%%%%%%%%%%%%%%%%%%

The {\it energy} of a knot 
\begin{equation}\label{def_energy_knot}
E(K)=\int_K\left[\lim_{\e\to 0^+}\left(\int_{K, |x-y|\ge\e}\frac{dy}{|x-y|^2}-\frac{2}\e\right)\right]dx 
\end{equation}
was introduced in \cite{O1}\footnote{
To be precise, in \cite{O1} we defined the energy by 
$\displaystyle E_O(K)=\int_K\left[\lim_{\e\to 0^+}\left(\int_{K, d_K(x,y)\ge\e}\frac{dy}{|x-y|^2}-\frac{2}\e\right)\right]dx,$ where $d_K(x,y)$ is the arc-length between $x$ and $y$ along the knot. As was pointed out in \cite{Obook} Remark 2.2.1, $E_O(K)$ and $E(K)$ coincide. 
}
motivated by a problem proposed by Fukuhara and Sakuma 
to ask for a functional on the space of knots which produces an ``optimal knot'' for each isotopy class (knot type) as an embedding that gives the minimum value within the isotopy class. 
For this purpose we like to evolve a knot along the gradient of the functional. 
To prevent self-crossing during the evolution process since it might change the knot type, we impose our functional to blow up if a knot degenerates 
to a singular knot with double points. 
This property is called {\it self-repulsiveness} (\cite{O1}) and has been studied in \cite{BR, 
Di-Er-Re98, LO, O92, R, SSvdM2013} etc. We remark that this property is called {\it charge} in \cite{Di-Er-Re98}.

It turns out that the energy $E$ is invariant under M\"obius transformations $T$ of $\RR^3\cup\{\infty\}$ that do not open the knot, i.e. $E(T(K))=E(K)$ as far as $T(K)$ is compact. 
The {\it M\"obius invariance} was proved by Freedman, He, and Wang \cite{FHW}, and $E+4$ is called the {\it M\"obius energy} of knots. 
They used M\"obius invariance to show that in each prime knot type there is a knot that minimizes the energy $E$ in the knot type. 
Other M\"obius invariant knot energies were studied in \cite{KS, LO}. 

Various generalizations of $E$ have been studied. 
Note that the energy is obtained by the regularization (renormalization) of a divergent integral $\iint_{K\times K}|x-y|^{-2}dxdy$. The technique used in \eqref{def_energy_knot} is called {\it Hadamard's regularization}, 
which was applied to surfaces by Auckly and Sadun \cite{AS}. 
Doyle and Schramm found the {\it cosine formula} of $E(K)$ (reported in \cite{AS,KS}), which was generalized for surfaces and closed submanifolds by Kusner and Sullivan \cite{KS}. 
In both cases, the generalization was carried out keeping the M\"obius invariance property. 
On the other hand, Brylinski \cite{B} reformulated the energy $E$ using analytic continuation (meromorphic regularization), which was generalized to higher dimension by Fuller and Vemuri \cite{FV} and also to compact bodies by Solanes and the author \cite{OS2}. 

In higher dimensions, while the M\"obius invariance has been studied in \cite{AS,KS,OS2}, the self-repulsiveness of the energies mentioned above does not seem to have been studied as far as the author knows, even though numerical experiments to evolve surfaces so as to decrease the energy have been carried out since the 90's, for example \cite{Bra1}. 
We remark that the self-repulsiveness of the integral Menger curvature and related global curvature energies has been studied in \cite{BK, K, KSvdM13, KSvdM18, SvdM, SvdM13}. 
In this paper we show the self-repulsiveness of the generalized energies for surfaces and higher dimensional closed submanifolds in $\RR^n$ with the uniform control on $C^\a$ topology $(\a\ge3)$ when the energies are either scale invariant or ``stronger'' (subcritical) using results of recent studies on regularized Riesz energies \cite{O2019,OS2}. 
Thus we give theoretical foundation of numerical experiments. 
We also discuss the behavior or the energy when surfaces have double points. 

\medskip
Acknowledgement: The author would like to thank Professor Paul M.N. Feehan for the inquiry of this problem. He would also like to thank the anonymous reviewer for the very careful reading and many insightful suggestions.

%%%%%%%%%%%%%%%%%%%%%%%%%%%%%%%%%%%%%%%%%%%%%%%%%%%%%%%%%%%%%%%%%%%%%%%%%%%%%%%%%%%%%
\section{Preparation from regularized Riesz energies}
%%%%%%%%%%%%%%%%%%%%%%%%%%%%%%%%%%%%%%%%%%%%%%%%%%%%%%%%%%%%%%%%%%%%%%%%%%%%%%%%%%%%%

Let us explain a general scheme of Hadamard regularization. 
Suppose $\int_X\omega$ blows up on $\Delta\subset X$. 
Restrict the integration to the complement of an $\e$-neighborhood of $\Delta$, 
expand the result in a Laurent series in $\e$ (possibly with a $\log$ term and terms with non-integral powers), and finally take the constant term in the series. 
The constant is called {\it Hadamard's finite part} of the integral, denoted by $\Pf\int_X\omega$. 
It can be considered as a generalization of Cauchy's principal value. 

For an $m$-dimensional closed submanifold $M$ in $\RR^n$ and for any real number $\a$ we define the {\it regularized $\a$-energy} $E_\a(M)$ by 
\begin{equation}\label{def_of_E_alpha}
E_\a(M)=\int_M\left(\Pf\int_M {|x-y|}^\a dy\right)dx 
%\Pf\iint_{M\times M}{|x-y|}^\a dxdy
\end{equation}
(\cite{OS2}). 
Note that we do not need any regularization when $\a>-m$. 
In Proposition 3.7 of \cite{OS2} we showed that $E_\a(M)$ is the same as the quantity obtained by applying regularization by analytic continuation to a function 
\begin{equation}\label{def_of_I_M_z}
z\mapsto\int_M \left(\int_{M}{|x-y|}^z dy\right)dx, 
\end{equation}
where $z$ is a complex variable. 
It is well-defined and 
holomorphic when $\R z>-m$. We can expand the domain to the whole complex plane by analytic continuation. Then we obtain a meromorphic function, which we denote by $B_M(z)$, that has only (possible) simple poles at $z=-m, -m-2, -m-4, \dots$. We call it {\it Brylinski's beta function of $M$}. We remark that some of the residues might be zero. Then the relation between the Hadamard regularization and the regularization by analytic continuation is given as follows. 
\[
E_\a(M)=\left\{\begin{array}{ll}
B_M(\a) & \qquad \mbox{if $\a$ is not a pole of $B_M(z)$},\\[1mm]
\displaystyle \lim_{w\to \a}\biggl(B_M(w)-\frac{{\rm Res}(B_M;\a)}{w-\a}\biggr) & \qquad \mbox{if $\a$ is a pole of $B_M(z)$}.
\end{array}
\right.
\]
This can be proved by using the coarea formula to reduce the integrals in the right hand sides of \eqref{def_of_E_alpha} and \eqref{def_of_I_M_z} to an integral of the form which has been studied in the theory of generalized functions (cf. for example, \cite{GS}). We will use a similar argument in the proof of Lemma \ref{lemma1} later. 

In what follows we fix the dimensions of submanifolds, $m$, and of the ambient space, $n$. 

\begin{definition}\label{def_M(e,b,V)} \rm 
Let $\e_0, b, V$ be positive numbers. 
Let $\mathcal{M}(k,\e_0,b,V)$ $(k\ge3)$ be the set of closed immersed submanifolds $M'$ in $\RR^n$ with volume not greater than $V$ such that for each $M'$ there is a closed manifold $M$ of class $C^{k}$ and an immersion $f\colon M\to M'=f(M)$ of class $C^{k}$ that satisfy the following conditions. 
\begin{enumerate}
\item For any point $x$ of $M'$, $f^{-1}(B_{\e_0}(x)\cap M')$ is a disjoint union of $W_1, \dots, W_\ell$. 
\item Each $W_i$ is $C^{k}$-diffeomorphic to an $m$-ball. 
\item The restriction of $f$ to $W_i$ is a $C^{k}$-diffeomorphism to $W_i'=f(W_i)\subset M'$ for each $i$. 
\item Each $W_i'$ can be expressed as a graph of a function of class $C^{k}$ 
\[
h_i\colon U_i\to {\big(T_{x}W_i'\,\big)}^\perp\cong\RR^{n-m}, \quack U_i\subset T_{x}W_i'
\]
that satisfies $\left|\partial^\mu h_i\right|\le b$ on $U_i$ for any orthogonal axes for $T_{x}W_i'$ and for any multi-index $\mu$ with $0\le |\mu|\le k$. 
\end{enumerate}

\end{definition}

Let $M'\in \mathcal{M}(k,\e_0,b,V)$, $x\in M'$ and $W_i'$ be as in the above definition. 
Put $\psi_{x,i}(t)$ $(1\le i\le l, 0\le t\le \e_0)$ to be the volume of $B_t(x)\cap W_i'$. 
Proposition 2.3 of \cite{O2019}\footnote{We remark that it is a kind of $C^k$ analogues of the argument in the proof of Theorem 3.3 of \cite{FV} and Proposition 3.1 and Corollary 3.2 of \cite{OS2}.}
implies that each $\psi_{x,i}'(t)$ can be expressed by 
\[
\psi_{x,i}'(t)=t^{m-1} \, \overline{\varphi}_{x,i}(t), 
\]
for some $\overline{\varphi}_{x,i}(t)$ of class $C^{k-2}$ that satisfies\footnote{We use the notation $\overline{\varphi}$ to be consistent with the notation in \cite{O2019} to avoid confusion with $\varphi$ used in \cite{OS2}.} 
\begin{equation}\label{overline_varphi_coeff}
\overline{\varphi}_{x,i}(0)=\sigma_{m-1}, \hspace{0.5cm}
\overline{\varphi}_{x,i}^{\,(2j-1)}(0)=0
\>\>\>(1\le 2j-1 \le k-2), \nonumber
\end{equation}
where $\sigma_{m-1}$ is the volume of the unit $(m-1)$-dimensional sphere. 

We show that $\overline{\varphi}_{x,i}^{\,(j)}(t)$ $(0\le j\le k-2)$ approaches $0$ if $\partial^\mu h_i$ $(0\le |\mu|\le j+1)$ all tend to $0$. For this purpose we introduce some results from \cite{O2019}. \\
\begin{lemma}\label{lemma_regularity}\rm{(Lemma 2.2 of \cite{O2019})}
Suppose $g$ is a function of class $C^{k+1}$ $(k\ge1)$ from a neighbourhood of $0$ to $\RR$. 
If $g$ satisfies $g(0)=g'(0)=0$ then $\bar g$ defined by 
\[
\bar g(t)=\left\{\begin{array}{ll}
\displaystyle \frac{g(t)}t & \hspace{0.4cm} t\ne0, \\[3mm]
0 & \hspace{0.4cm} t=0
\end{array}
\right.
\]
is of class $C^k$, and 
\begin{equation}\label{derivative_bar_g}
{\bar g}^{(j)}(0)=\frac{g^{(j+1)}(0)}{j+1} \hspace{0.4cm} (0\le j\le k).
\end{equation}
\end{lemma}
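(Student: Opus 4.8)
The plan is to avoid differentiating the quotient $g(t)/t$ directly and instead pass to an integral representation of $\bar g$ that is manifestly smooth across $t=0$. Since $g(0)=0$, the fundamental theorem of calculus gives $g(t)=\int_0^t g'(s)\,ds$, and the substitution $s=tu$ turns this into $g(t)=t\int_0^1 g'(tu)\,du$. Hence for $t\ne 0$,
\[
\bar g(t)=\frac{g(t)}{t}=\int_0^1 g'(tu)\,du,
\]
and the right-hand side makes sense at $t=0$ as well, where it equals $g'(0)$. Because $g'(0)=0$ by hypothesis, this value agrees with the definition $\bar g(0)=0$, so the identity $\bar g(t)=\int_0^1 g'(tu)\,du$ holds on the entire neighbourhood, including the origin. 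This is the step where both hypotheses $g(0)=0$ and $g'(0)=0$ are used.

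Next I would differentiate under the integral sign. Writing $\varphi(t,u)=g'(tu)$, we have $\partial_t^{\,j}\varphi(t,u)=u^j g^{(j+1)}(tu)$, and since $g$ is of class $C^{k+1}$ the functions $g^{(j+1)}$ are continuous for $0\le j\le k$. Thus $\varphi$ and its partial $t$-derivatives up to order $k$ are jointly continuous on $[a,b]\times[0,1]$ for any closed subinterval $[a,b]$ of the neighbourhood, which justifies repeated differentiation under the integral and shows that $\bar g$ is of class $C^k$ with
\[
\bar g^{(j)}(t)=\int_0^1 u^j\,g^{(j+1)}(tu)\,du\qquad(0\le j\le k).
\]
Evaluating at $t=0$ and using $\int_0^1 u^j\,du=1/(j+1)$ yields $\bar g^{(j)}(0)=g^{(j+1)}(0)/(j+1)$, which is exactly the claimed formula \eqref{derivative_bar_g}.

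I expect the only genuine obstacle to be the careful justification of differentiation under the integral sign at and near $t=0$; this reduces to the standard Leibniz rule once joint continuity of the integrand and its $t$-derivatives is observed on a compact domain, so no real difficulty arises. An alternative route by induction on $k$, differentiating the quotient for $t\ne 0$ and computing $\bar g^{(j)}(0)$ from difference quotients via Taylor's theorem, also works, but it is more cumbersome because it forces one to treat the point $t=0$ separately at every stage, whereas the integral representation handles the origin uniformly.
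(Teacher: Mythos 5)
Your argument is correct and complete: the representation $\bar g(t)=\int_0^1 g'(tu)\,du$ (valid at $t=0$ precisely because $g'(0)=0$) together with differentiation under the integral sign, justified by the joint continuity of $u^j g^{(j+1)}(tu)$ on a compact rectangle, yields both the $C^k$ regularity and the formula $\bar g^{(j)}(0)=g^{(j+1)}(0)/(j+1)$. Note that the paper itself gives no proof of this statement, merely quoting it as Lemma 2.2 of the cited reference \cite{O2019}; your integral-representation (Hadamard's lemma) argument is the standard clean route and handles the origin uniformly, avoiding the case analysis at $t=0$ that an induction on the quotient $g(t)/t$ would require.
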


\begin{lemma}\label{lemma_overline_b}
Put $\overline\a=\max\{\lfloor -\a \rfloor-m,0\}$ for $\a\in\RR$. 
There are positive numbers $\tilde \e_0$ and $\tilde b$ such that if $0<\e_0<\tilde \e_0$ and $0<b<\tilde b$ then 
there is a positive constant $\overline b=\overline b(\a,\e_0,b)$ %depending on $\a,\e_0$, and $b$ 
such that 
$\big|\overline{\varphi}_{x,i}^{\,(j)}(t)\big|\le \overline b$ for any $M'\in \mathcal{M}(\overline\a+3,\e_0,b,V)$, $x\in M'$, $i=1,\dots,l$, $1\le j\le \overline\a+1$ and $t\in[0,\e_0]$. 
\end{lemma}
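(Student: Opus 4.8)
The plan is to make the decomposition $\psi_{x,i}'(t)=t^{m-1}\overline{\varphi}_{x,i}(t)$ of Proposition 2.3 of \cite{O2019} completely explicit in terms of the graphing function $h_i$, and then to read off the bound on the derivatives of $\overline{\varphi}_{x,i}$ directly from the hypothesis $|\partial^\mu h_i|\le b$ of Definition \ref{def_M(e,b,V)}. The key point I want to exploit is that, after normalizing, the passage $h_i\mapsto\overline{\varphi}_{x,i}$ is one and the same \emph{universal} construction for every $x$, $i$ and $M'$, which reduces the desired uniform bound to a continuity statement for a fixed functional on a uniformly $C^{k}$-bounded family of graphs.

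First I would set up coordinates adapted to $x$ and $W_i'$: translate $x$ to the origin and identify $T_xW_i'$ with $\RR^m$, so that $W_i'$ is the graph $u\mapsto(u,h_i(u))$ with $h_i(0)=0$ and $Dh_i(0)=0$. Writing $r(u)=\big(|u|^2+|h_i(u)|^2\big)^{1/2}$ for the distance from $x$ and $J(u)=\sqrt{\det\big(I+Dh_i^{\,T}Dh_i\big)}$ for the volume element of the graph, one has $\psi_{x,i}(t)=\int_{\{r\le t\}}J\,du$. Passing to polar coordinates $u=\rho\,\omega$ $(\omega\in S^{m-1})$ and using that $b$ small forces $\partial_\rho(r^2)=2\rho\big(1+O(b^2)\big)>0$, the equation $r(\rho\,\omega)=t$ defines $\rho=\rho(\omega,t)$ uniquely; this monotonicity, together with the requirement that the domains $U_i$ contain a ball of radius comparable to $\e_0$, is exactly where the smallness conditions on $\tilde b$ and $\tilde\e_0$ enter. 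Integrating in $\rho$ and differentiating in $t$ then gives
\[
\overline{\varphi}_{x,i}(t)=\frac{\psi_{x,i}'(t)}{t^{m-1}}=\int_{S^{m-1}}J(\rho\,\omega)\Big(\frac{\rho}{t}\Big)^{m-1}\rho_t\,d\omega,
\]
an integral over the unit sphere of an integrand built from $\rho$, $\rho_t$ and $Dh_i$, which reduces to the constant $\sigma_{m-1}$ precisely when $h_i\equiv0$.

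The heart of the argument is to show that for $1\le j\le\overline\a+1=k-2$ the derivative $\overline{\varphi}_{x,i}^{\,(j)}(t)$ is a continuous function of the partial derivatives $\partial^\mu h_i$ with $|\mu|\le j+1$ that vanishes when all of them vanish. Differentiating $\rho^2+|h_i(\rho\,\omega)|^2=t^2$ repeatedly shows that $\partial_t^{\,p}\rho$ involves $\partial^\mu h_i$ only with $|\mu|\le p$, and likewise each $\partial_t^{\,p}$ of $J(\rho\,\omega)$ only with $|\mu|\le p+1$; hence $\overline{\varphi}_{x,i}^{\,(j)}(t)$ genuinely depends only on $\partial^\mu h_i$ with $|\mu|\le j+1\le k-1$, all of which are controlled by $b$. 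The expressions produced in this way are rational in $r$ and its $u$-derivatives and are singular at $t=0$ (already $\rho_t=t/\big[\rho+h_i\bcdot(Dh_i)\omega\big]$ is of the form $0/0$ there); because $h_i$ vanishes to second order at the origin, each such expression carries compensating factors of $\rho$, and I would strip these off by repeated application of Lemma \ref{lemma_regularity}, which simultaneously certifies the $C^{k-2}$ regularity and records the boundary values via \eqref{derivative_bar_g}. The parity of the flat model is what forces the odd derivatives $\overline{\varphi}_{x,i}^{\,(2j-1)}(0)$ to vanish and, more importantly, confines all genuine $t$-dependence to the nonflat contribution, so that $\overline{\varphi}_{x,i}^{\,(j)}$ vanishes identically in the flat case for $j\ge1$.

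Finally I would conclude by uniformity: after the above normalization the functional $h_i\mapsto\overline{\varphi}_{x,i}^{\,(j)}(t)$ is the same for every $x$, $i$, $M'$, its argument ranges over $C^{k}$ graphs over $m$-balls of radius $\gtrsim\e_0$ subject to $|\partial^\mu h_i|\le b$ $(|\mu|\le k)$, and it is continuous in the $C^{j+1}$-data. Continuity together with this uniform bound yields a single constant $\overline b=\overline b(\a,\e_0,b)$ dominating $\big|\overline{\varphi}_{x,i}^{\,(j)}(t)\big|$ for all admissible data, all $t\in[0,\e_0]$ and all $1\le j\le\overline\a+1$, as required. I expect the main obstacle to be the third step: carefully differentiating the level-set integral with its $t$-dependent domain, keeping the bookkeeping that shows only $\partial^\mu h_i$ with $|\mu|\le j+1$ survive, and invoking Lemma \ref{lemma_regularity} enough times to cancel every apparent singularity at $t=0$ while keeping all estimates uniform in $x$, $i$ and $M'$.
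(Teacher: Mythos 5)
Your proposal is correct and follows essentially the same route as the paper: the paper likewise writes $\overline{\varphi}_{x,i}(t)=\int_{S^{m-1}}J(\xi(v,t)v)\bigl(\xi(v,t)/t\bigr)^{m-1}\,\partial_t\xi(v,t)\,dv$ with $\xi(v,t)$ the inverse of $t(v,\xi)=\xi\sqrt{1+|h_i(\xi v)/\xi|^2}$ (your $\rho(\omega,t)$), uses Lemma \ref{lemma_regularity} to remove the apparent $0/0$ singularities at $t=0$, observes that $\overline{\varphi}_{x,i}^{\,(j)}$ depends continuously on $\partial^\mu h_i$ for $|\mu|\le j+1$, and concludes from the vanishing of these derivatives in the flat case $h_i\equiv0$. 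Your write-up merely makes the final uniformity step slightly more explicit than the paper does.
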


\begin{proof}
We use the argument in the proof of Proposition 2.3 of \cite{O2019}. 
Let us fix $i$. 
Define $H\colon U_i\to\RR^n$ by $H(y)=(y,h_i(y))$. 
Let $J(y)$ be the Jacobian of $H$: 
\begin{equation}\label{Jacbian}
J(y)=\sqrt{\det\left\langle\frac{\partial H}{\partial x_p}(y), \, \frac{\partial H}{\partial x_q}(y)\right\rangle}
=\sqrt{\det\left(\left\langle\frac{\partial h_i}{\partial x_p}(y), \, \frac{\partial h_i}{\partial x_q}(y)\right\rangle+\delta_{pq}\right)}
\>,\hspace{0.6cm}(p,q=1,\dots,m).
\end{equation}
For a unit vector $v\in S^{m-1}$, define $t(v,\bcdot\,)$ on a neighbourhood of $0$ by  
\[
t(v,\xi)=\left\{
\begin{array}{ll}
\displaystyle 
\mbox{sgn}(\xi)|H(\xi v)|=\xi\,\sqrt{1+\left|\frac{h_i(\xi v)}\xi\right|^2} 
&\hspace{0.4cm}(\xi\ne0), \\[3mm]
0&\hspace{0.4cm}(\xi=0),
\end{array}
\right.
\]
where $\mbox{sgn}(\xi)$ is the signature of $\xi$. 
Let $\xi(v,\bcdot\,)$ be the inverse function of $t(v,\bcdot\,)$;
\[
\xi(v,t)=\left\{
\begin{array}{ll}
\displaystyle \xi\ge0 \>\mbox{ such that }\>{\xi}^2+{|h_i(\xi v)|}^2=t^2 & \hspace{0.4cm} (t\ge0), \\[2mm]
\displaystyle \xi<0 \>\mbox{ such that }\>{\xi}^2+{|h_i(\xi v)|}^2=t^2 & \hspace{0.4cm} (t<0). \\[2mm]
\end{array}
\right.
\]
Since $h_i$ is of class $C^{\overline\a+3}$, Lemma \ref{lemma_regularity} implies that $t(v,\bcdot\,)$ is of class $C^{\overline\a+2}$, and therefore $\xi(v,\bcdot\,)$ is of class $C^{\overline\a+2}$. 
We have $\xi'(v,0)=1$ and $\xi(-v,t)=-\xi(v,-t)$. \\
\indent
By substituting $g(t)=\xi(v,t)-t$ in Lemma \ref{lemma_regularity} we obtain 
\[
\left. \frac{\xi(v,t)}{t}\right|_{t=0}=1. 
\]
The proof of Proposition 2.3 of \cite{O2019} shows 
\begin{equation}\label{varphi_bar}
\overline{\varphi}_{x,i}(t)=\int_{S^{m-1}} J(\xi(v,t)\, v)\left(\frac{\xi(v,t)}{t}\right)^{m-1}\,\frac{\partial\xi}{\partial t}(v,t)\, dv.
\end{equation}
Since $\xi(v,\bcdot\,)$ is of class $C^{\overline\a+2}$, both 
$\frac{\xi(v,t)}{t}$ and $\frac{\partial\xi}{\partial t}(v,t)$ are of class $C^{\overline\a+1}$ as functions of $t$ by Lemma \ref{lemma_regularity}, which implies that $\overline{\varphi}_{M,x}(t)$ is of class $C^{\overline\a+1}$. 
Since $\frac{\partial t}{\partial\xi}(v,0)=1$, on a neighbourhood of $0$ there holds $\frac{\partial t}{\partial\xi}(v,\xi)>0$ and 
\begin{equation}\label{dxidt}
\frac{\partial\xi}{\partial t}(v,t)=\left[\frac{\partial t}{\partial\xi}\big(v,\xi(v,t)\big)\right]^{-1},
\end{equation}
where 
\begin{equation}\label{dtdxi}
\frac{\partial t}{\partial\xi}(v,\xi)
=\frac{\displaystyle \xi+\sum_{p=1}^m v_p\left\langle\frac{\partial h_i}{\partial x_p} (\xi v), \,h_i(\xi v)\right\rangle}
{\displaystyle \xi\,\sqrt{1+\left|\frac{h_i(\xi v)}\xi\right|^2}} 
\quad (\xi\ne0), \qquad \frac{\partial t}{\partial\xi}(v,0)=1. 
\end{equation}
\\
\indent
By \eqref{Jacbian}, \eqref{dxidt}, \eqref{dtdxi} and \eqref{varphi_bar}, when $j$ satisfies $0\le j\le \overline\a+1$, $\overline{\varphi}_{x,i}^{\,(j)}(t)$ depends continuously on $\partial^\mu h_i$ $(0\le |\mu|\le j+1)$. 
Since $\overline{\varphi}_{x,i}^{\,(j)}(t)\equiv0$ for $j\ge1$ if $h_i\equiv0$, the conclusion follows. 
\end{proof}

\begin{remark}\rm 
Since $M$ is compact, the above proof in fact implies a stronger statement: \\
Let $\overline\a=\max\{\lfloor -\a \rfloor-m,0\}$ and $V>0$. 
For any positive number $\hat b$ there are positive numbers $\e_0'$ and $b'$ such that for any $M'\in \mathcal{M}(\overline\a+3,\e_0',b',V)$, $x\in M'$, $i=1,\dots,l$, $1\le j\le \overline\a+1$ and $t\in[0,\e_0']$ we have $\big|\overline{\varphi}_{x,i}^{\,(j)}(t)\big|< \hat b$. 
\end{remark}
\begin{lemma}\label{lemma1}
For any real number $\a$ and for any positive numbers $\e_0,b,V$ 
there is a positive number $b^0=b^0(\a,\e_0,b)$ such that 
for any $M'$ in $\mathcal{M}(\overline\alpha+3,\e_0,b,V)$, for any point $x$ in $M'$, for any $W_i'$, where $W_i'$ is as in Definition \ref{def_M(e,b,V)}, 
we have
\[
\left|\Pf \int_{B_{\e_0}(x)\cap W_i'}{|x-y|}^\a dy \right|\le b^0.
%b_\ell^0\le \Pf \int_{B_{\e_0}(x)\cap f(W_i')}{|x'-y|}^\a dy \le b_u^0.
\]
\end{lemma}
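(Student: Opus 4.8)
The plan is to convert the surface integral into a one-dimensional integral by the coarea formula and then carry out Hadamard's regularization explicitly, tracking every constant so that the final bound is uniform. Since $|x-y|=t$ on $\partial B_t(x)$ and $\psi_{x,i}(t)=\mathrm{vol}(B_t(x)\cap W_i')$ satisfies $\psi_{x,i}'(t)=t^{m-1}\overline{\varphi}_{x,i}(t)$, restricting the integration to $\e\le|x-y|\le\e_0$ gives
\[
\int_{(B_{\e_0}(x)\setminus B_\e(x))\cap W_i'}|x-y|^\a\,dy=\int_\e^{\e_0}t^{\,\a+m-1}\,\overline{\varphi}_{x,i}(t)\,dt ,
\]
so that $\Pf\int_{B_{\e_0}(x)\cap W_i'}|x-y|^\a\,dy$ is, by definition, the constant term of the right-hand side in its expansion in $\e$ as $\e\to0^+$.

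Next I would write $\overline{\varphi}_{x,i}(t)=P_{x,i}(t)+R_{x,i}(t)$, where $P_{x,i}(t)=\sum_{j=0}^{\overline\a}c_j t^j$ with $c_j=\overline{\varphi}_{x,i}^{\,(j)}(0)/j!$ is the degree-$\overline\a$ Taylor polynomial at $0$ and $R_{x,i}$ is the remainder; this is legitimate because $\overline{\varphi}_{x,i}$ is of class $C^{\overline\a+1}$. By linearity of the finite-part operation the two pieces are treated separately. Each monomial contributes elementarily,
\[
\Pf\int_0^{\e_0}t^{\,\a+m-1+j}\,dt=
\begin{cases}
\dfrac{\e_0^{\,\a+m+j}}{\a+m+j} & (\a+m+j\ne0),\\[2mm]
\log\e_0 & (\a+m+j=0),
\end{cases}
\]
since in each case the $\e$-dependent part is a pure power of $\e$ (or $-\log\e$) and hence does not enter the constant term; the borderline case $\a+m+j=0$ merely yields the finite constant $\log\e_0$.

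The crucial point is that the surviving integral needs no regularization and that every estimate is uniform in $M'$, $x$, $i$. Lemma \ref{lemma_overline_b} provides $\overline b=\overline b(\a,\e_0,b)$ with $\big|\overline{\varphi}_{x,i}^{\,(j)}(t)\big|\le\overline b$ for $1\le j\le\overline\a+1$ and $t\in[0,\e_0]$, while $\overline{\varphi}_{x,i}(0)=\sigma_{m-1}$ is a fixed constant; hence $|c_j|\le\max\{\sigma_{m-1},\overline b\}$ and the polynomial contribution is bounded by a constant depending only on $\a,\e_0,b$. For the remainder, the Lagrange form of Taylor's theorem gives $\big|R_{x,i}(t)\big|\le\frac{\overline b}{(\overline\a+1)!}\,t^{\,\overline\a+1}$, so
\[
\int_0^{\e_0}t^{\,\a+m-1}\big|R_{x,i}(t)\big|\,dt\le\frac{\overline b}{(\overline\a+1)!}\int_0^{\e_0}t^{\,\a+m+\overline\a}\,dt ,
\]
and the last integral converges because $\a+m+\overline\a>-1$. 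Its finite part is the convergent integral itself, again bounded by a constant depending only on $\a,\e_0,b$.

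Adding the two estimates yields the desired $b^0=b^0(\a,\e_0,b)$, uniform over $\mathcal{M}(\overline\a+3,\e_0,b,V)$, $x$ and $i$. The main obstacle is the bookkeeping that secures this uniformity: one must verify that subtracting precisely the degree-$\overline\a$ Taylor polynomial simultaneously removes every divergent power of $\e$ and leaves a remainder integral that converges, which is exactly why $\overline\a=\max\{\lfloor-\a\rfloor-m,0\}$ is the correct count (indeed $\a+m+\overline\a>-1$ always holds, and reduces to $\a+m>0$ when $\a>-m$, where no regularization is needed at all). The single constant $\overline b$ furnished by Lemma \ref{lemma_overline_b}, depending only on $\a,\e_0,b$ and not on the individual $M'$, must be made to control both the Taylor coefficients and the Lagrange remainder at once.
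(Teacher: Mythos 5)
Your proposal is correct and follows essentially the same route as the paper: reduce via the coarea formula to $\int_\e^{\e_0}t^{\a+m-1}\overline\varphi_{x,i}(t)\,dt$, split off the degree-$\overline\a$ Taylor polynomial (with the $\log$ convention at the integer borderline), bound the Lagrange remainder by the uniform constant $\overline b$ from Lemma \ref{lemma_overline_b}, and observe that $\a+m+\overline\a>-1$ makes the remainder integral converge so the finite part is bounded by a constant depending only on $\a,\e_0,b$. The only cosmetic difference is that you invoke linearity of the finite-part operation to treat the two pieces separately, whereas the paper computes the full $\e$-expansion in one display before discarding the divergent terms.
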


\begin{proof}
We may assume that $\a\le-m$ and hence $\overline\alpha=\lfloor -\a \rfloor-m$ since if $\a>-m$ then the conclusion is trivial as the integral converges without regularization. 

We use the convention $\xi^0/0=\log \xi$ for $\xi>0$ in what follows to make the formulae simpler. 

The argument in what follows is somehow parallel to that in \cite{OS2} from Proposition 3.1 to Proposition 3.3, 
where we use the coarea formula to reduce the integrand of the energy (i.e the inner integral of \eqref{def_of_E_alpha}) 
into an integral of the form $\int_0^d t^w\varphi(t)\,dt$ for some $d>0$ and some smooth (or with suitable regularity) function $\varphi$ which has been studied in the theory of generalized functions (cf. for example, \cite{GS}). 

For $0<\e\le\e_0$ we have  
\begin{equation}\label{f_l22-1}
%\int_{(B_{\e_0}(x)\setminus B_{\e}(x))\cap f(W_i)}{|x-y|}^\a dy 
\int_{(B_{\e_0}(x)\cap W_i')\setminus B_{\e}(x)}{|x-y|}^\a dy 
=\int_\e^{\e_0}t^{\a+m-1}\,\overline\varphi_{x,i}(t)\,dt. 
\end{equation}
Noting that $\overline\varphi_{x,i}(t)$ is of class $\overline\a+1$ we see 
\begin{eqnarray}
\displaystyle \int_\e^{\e_0}t^{\a+m-1}\,\overline\varphi_{x,i}(t)\,dt
%\int_{(B_{\e_0}(x)\cap f(W_i))\setminus B_{\e}(x)}{|x-y|}^\a dy 
&=&\displaystyle \int_\e^{\e_0}t^{\a+m-1}\left(\overline\varphi_{x,i}(t)-\sum_{j=0}^{\overline\a}\frac{\overline\varphi_{x,i}^{(j)}(0)}{j!}\,t^j\right)dt \label{f_l22-3}\\[2mm]
&&\displaystyle  +\sum_{j=0}^{\overline\a}\frac{\overline\varphi_{x,i}^{(j)}(0)}{(\a+m+j)j!}\,\left({\e_0}^{\a+m+j}-{\e}^{\a+m+j}\right). \label{f_l22-4}
\end{eqnarray}
We agree that if $\a\in\ZZ$ the last term of \eqref{f_l22-4} when $j=\overline\a$, where $\overline\a=-\a-m$, is meant to be 
\[
\frac{\overline\varphi_{x,i}^{(\overline\a)}(0)}{\overline\a\, !}\,\left(\log{\e_0}-\log{\e}\right)
\]
by our convention $\xi^0/0=\log\xi$ for $\xi>0$. 

There is a constant $c\in(0,1)$ such that 
\begin{equation}\label{f_l22-2}
\overline\varphi_{x,i}(t)=\sum_{j=0}^{\overline\a}\frac{\overline\varphi_{x,i}^{(j)}(0)}{j!}\,t^j
+\frac{\overline\varphi_{x,i}^{(\overline\a+1)}(ct)}{(\overline\a+1)!}\,t^{\overline\a+1} %\quack (0<\exists c<1),
\end{equation}
so that Lemma \ref{lemma_overline_b} implies that 
\begin{equation}\label{f25}
\left| t^{\a+m-1}\left(\overline\varphi_{x,i}(t)-\sum_{j=0}^{\overline\a}\frac{\overline\varphi_{x,i}^{(j)}(0)}{j!}\,t^j\right) \right|\le \frac{\overline b}{(\overline\a+1)!}\,t^{\a+m+\overline\a},
\end{equation}
where $\overline b=\overline b(\a,\e_0,b)$ is a positive constant given by Lemma \ref{lemma_overline_b}. 
Since $\a+m+\overline\a>-1$ the integral in the right hand side of \eqref{f_l22-3} converges as $\e$ tends to $0$. 

Therefore, by subtracting the negative power terms of $\e$ and the log term if it exists, we obtain Hadamard's finite part:
\begin{eqnarray}
\displaystyle \Pf \int_{B_{\e_0}(x)\cap W_i'}{|x-y|}^\a dy
&=&\displaystyle \lim_{\e\to0^+}\left(\int_\e^{\e_0}t^{\a+m-1}\,\overline\varphi_{x,i}(t)\,dt+\sum_{j=0}^{\overline\a}\frac{\overline\varphi_{x,i}^{(j)}(0)}{(\a+m+j)j!}\,{\e}^{\a+m+j}\right) \nonumber \\%[4mm]
&=&\displaystyle \sum_{j=0}^{\overline\a}\frac{\overline\varphi_{x,i}^{(j)}(0)}{(\a+m+j)j!}\,{\e_0}^{\a+m+j} \label{eq_Pf}\\%[4mm]
&&\displaystyle +\int_0^{\e_0}t^{\a+m-1}\left(\overline\varphi_{x,i}(t)-\sum_{j=0}^{\overline\a}\frac{\overline\varphi_{x,i}^{(j)}(0)}{j!}\,t^j\right)dt. \label{eq_Pf2}
\end{eqnarray}
Since $\big|\overline{\varphi}_{x,i}^{\,(j)}(0)\big|\le \overline b$ $(1\le j\le \overline\a)$ 
by Lemma \ref{lemma_overline_b}, \eqref{f25}, \eqref{eq_Pf} and \eqref{eq_Pf2} imply 
\[
\left|\Pf \int_{B_{\e_0}(x)\cap W_i'}{|x-y|}^\a dy \right|
\le
\frac{\sigma_{m-1}}{|\a+m|}\,{\e_0}^{\a+m}+
\sum_{j=1}^{\overline\a+1}\frac{\overline b}{|\a+m+j|\,j!}\,{\e_0}^{\a+m+j},
\]
which completes the proof. 
\end{proof}

%%%%%%%%%%%%%%%%%%%%%%%%%%%%%%%%%%%%%%%%%%%%%%%%%%%%%%%%%%%%%%%%%%%%%%%%%%%%%%%%%%%%%
\section{Self-repulsiveness of energies}
%%%%%%%%%%%%%%%%%%%%%%%%%%%%%%%%%%%%%%%%%%%%%%%%%%%%%%%%%%%%%%%%%%%%%%%%%%%%%%%%%%%%%

Let us first explain $C^q$-topology on the space of $m$-dimensional closed immersed submanifolds of class $C^k$ in $\RR^n$ ($m\le n, 0\le q\le k$) after Budney's exposition \cite{Ryan} based on Hirsch's book \cite{H}, pages 34 and 35. \\ \indent
Let $M$ and $N$ be $C^k$-manifolds. Let $C^k(M,N)$ be the set of $C^k$-maps from $M$ to $N$. 
Let $f\in C^k(M,N)$, $(\varphi,U)$ and $(\psi,V)$ be local charts of $M$ and $N$ respectively, $K\subset U$ be a compact set such that $f(K)\subset V$ and $\e>0$. 
Let $\mathcal{N}^k_q(f, (\varphi,U), (\psi,V), K, \e)$ be the set of $g\in C^k(M,N)$ that satisfy $g(K)\subset V$ and 
\[
\left|\partial^\mu(\psi g\varphi^{-1})(x)-\partial^\mu(\psi f\varphi^{-1})(x)
\right|<\e
\]
for all $x\in\varphi(K)$ and multi-index $\mu$ with $|\mu|\le q$. 
The {\it compact-open $C^q$-topology} or {\it weak topology} of $C^k(M,N)$ is the topology generated by the subbase $\{\mathcal{N}^k_q(f, (\varphi,U), (\psi,V), K, \e)\}$. \\ \noindent
Fix an $m$-dimensional closed manifold $M$. 
Let ${\rm{Diff}}^k_q(M)$ be the set of $C^k$-diffeomorphisms of $M$ and ${\rm{Imm}}^k_q(M,\RR^n)$ be the set of immersions from $M$ to $\RR^n$, both endowed with the compact-open $C^q$-topology (i.e. relative topology as the subspaces of $C^k_q(M,M)$ and $C^k_q(M,\RR^n)$ respectively). 
The group ${\rm{Diff}}^k_q(M)$ acts on ${\rm{Imm}}^k_q(M,\RR^n)$ by composition. 
The quotient space 
\[
{\rm{Imm}}^k_q(M,\RR^n)/{\rm{Diff}}^k_q(M) 
\]
is the space of submanifolds in $\RR^n$ that are the images of immersions from $M$. 
Now we take the disjoint union 
\[
\bigsqcup_{M} \>{\rm{Imm}}^k_q(M,\RR^n)/{\rm{Diff}}^k_q(M),
\]
where we take the union over all the diffeomorphism types of $m$-dimensional closed manifolds $M$.

We say that a functional on the space of $C^k$ closed submanifolds of $\RR^n$ is \jb{{\it self-repulsive}} with respect to $C^q$-topology if the value of the functional blows up as an \jb{embedded} submanifold approaches with respect to $C^q$-topology to an immersed submanifold with double points. 

%!!!!!!!!!!!!!!!!!!!!!!!!!!!!!!!!!!!!!!!!!!!!!!!!!!!!!!!!!!!!!!!!!!!!!
\subsection{Regularized Riesz energy}
%!!!!!!!!!!!!!!!!!!!!!!!!!!!!!!!!!!!!!!!!!!!!!!!!!!!!!!!!!!!!!!!!!!!!!
%!!!!!!!!!!!!!!!!!!!!!!!!!!!!!!!!!!!!!!!!!!!!!!!!!!!!!!!!!!!!!!!!!!!!!
\subsubsection{Self-repulsiveness}
%!!!!!!!!!!!!!!!!!!!!!!!!!!!!!!!!!!!!!!!!!!!!!!!!!!!!!!!!!!!!!!!!!!!!!
%
The regularized $\a$-energy $E_\a$ restricted to $\mathcal{M}(\overline\alpha+3,\e_0,b,V)$, where $\overline\a=\max\{\lfloor -\a \rfloor-m,0\}$ as in Lemma \ref{lemma_overline_b}, is self-repulsive with respect to $C^0$-topology if $\a\le-2m$ in the following sense. 

\begin{theorem}\label{theorem1}
Let $\e_0,b,V>0$. 
If $\a\le-2m$ then for any positive number $C$ there is a positive number $\delta$ such that if an embedded closed submanifold $M\in\mathcal{M}(\overline\alpha+3,\e_0,b,V)$ satisfies the following conditions then $E_\a(M)>C$. 

\begin{enumerate}
\item There are an immersed submanifold $M'\in\mathcal{M}(\overline\alpha+3,\e_0,b,V)$ with a double point $x'$ and a $C^{\overline\a+3}$-immersion $f\colon M\to M'=f(M)$ such that $|f(x)-x|<\delta$ for any $x$ in $M$. 
\item $f^{-1}\big(B_{\e_0}(x')\cap M'\,\big)$ is a disjoint union of $W_1,\dots, W_l$ $(l\ge2)$, each $W_i$ is $C^{\overline\a+3}$-diffeomorphic to an $m$-ball, and that $f|_{W_i}\colon W_i\to M'$ is a $C^{\overline\a+3}$-embedding. 
\end{enumerate}

\end{theorem}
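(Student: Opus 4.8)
The plan is to isolate the contribution to $E_\a(M)$ coming from the two sheets $W_1,W_2$ that are forced together near the double point $x'$, to show that this ``cross term'' blows up as $\delta\to0$, and to absorb everything else into a bound that is uniform in the configuration. Since $\a<0$ the integrand $|x-y|^\a$ is everywhere positive, so the only source of negativity in $E_\a(M)$ is the Hadamard regularization along the diagonal $y=x$. Accordingly, for $x\in M$ let $P(x)=B_{\e_0}(x)\cap W_{i(x)}$ be the principal sheet through $x$ (with $W_{i(x)}$ the sheet of $M$ at $x$ from Definition \ref{def_M(e,b,V)}), and split
\[
\Pf\int_M|x-y|^\a\,dy=\Pf\int_{P(x)}|x-y|^\a\,dy+\int_{M\setminus P(x)}|x-y|^\a\,dy .
\]
The first term is a finite part over a single sheet, so $\bigl|\Pf\int_{P(x)}|x-y|^\a\,dy\bigr|\le b^0$ by Lemma \ref{lemma1}, while the second integrand is nonnegative. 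Integrating over $M$ and using $\mathrm{vol}(M)\le V$ gives $E_\a(M)\ge -b^0V+\int_M\int_{M\setminus P(x)}|x-y|^\a\,dy\,dx$.

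To extract the blow-up I would restrict the outer integral to $W_1$ and retain only $W_2$ inside. For $x\in W_1$ the principal sheet $P(x)$ coincides with the local sheet of $W_1$, which for $\delta$ small is disjoint from $W_2$ (the two $C^{\overline\a+3}$ graphs $W_1,W_2$ are disjoint by hypothesis); hence $W_2\subset M\setminus P(x)$ and $E_\a(M)\ge -b^0V+\int_{W_1}\!\int_{W_2}|x-y|^\a\,dy\,dx$. Now I would parametrize each sheet by its own tangent-plane graph coordinate. Because $f(p_1)=f(p_2)=x'$ for some $p_i\in W_i$ and $|f-\mathrm{id}|<\delta$, the base points satisfy $|p_1-p_2|<2\delta$; centring the graph parametrizations $W_i(\bcdot)$ at $p_i$, membership in $\mathcal M(\overline\a+3,\e_0,b,V)$ yields a Lipschitz bound $|W_i(u)-W_i(0)|\le L|u|$ with $L=L(m,b)$ and graph Jacobians $\ge1$. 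Thus $|W_1(u)-W_2(v)|\le L(|u|+|v|)+2\delta\le\sqrt2\,L\,\rho+2\delta$ with $\rho=\sqrt{|u|^2+|v|^2}$, and since $\a<0$ the inequality reverses under the $\a$-power, giving
\[
\int_{W_1}\!\int_{W_2}|x-y|^\a\,dy\,dx\ge\int_{B^m_R}\!\int_{B^m_R}\bigl(\sqrt2\,L\,\rho+2\delta\bigr)^\a\,dv\,du
\]
for a uniform radius $R\sim\e_0$.

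Passing to polar coordinates in $\RR^{2m}$ and using $B^m_R\times B^m_R\supset B^{2m}_R$, the right-hand side is bounded below by $\sigma_{2m-1}\int_0^R(\sqrt2\,L\,\rho+2\delta)^\a\rho^{2m-1}\,d\rho$. This is exactly where the hypothesis enters: as $\delta\downarrow0$ the integrand increases monotonically to $(\sqrt2\,L)^\a\rho^{\a+2m-1}$, whose integral over $(0,R)$ diverges precisely when $\a+2m-1\le-1$, i.e. $\a\le-2m$. By the monotone convergence theorem this lower bound tends to $+\infty$ as $\delta\to0$, and—crucially—uniformly, since $L$, $R$ and the exponent depend only on $m,b,\a,\e_0$, not on $M$ or $M'$. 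Given $C$, I would set $C'=C+b^0V$ and choose $\delta$ so small that the radial integral exceeds $C'$; then the cross term exceeds $C'$ and $E_\a(M)\ge-b^0V+C'=C$, with strict inequality $E_\a(M)>C$ since the radial bound is strict.

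The main obstacle is this final uniform estimate. The difficulty is twofold: we control $f$ only in $C^0$, so the induced parametrizations of $W_i$ converge to those of the singular limit $M'$ only in $C^0$ and their Jacobians need not converge; and the contact of the two sheets at $x'$ may be arbitrarily tangential, so the separation $|W_1(u)-W_2(v)|$ admits no uniform linear lower bound. Both obstructions are sidestepped by working with the intrinsic \emph{graph} parametrizations of $M$—whose Jacobians are automatically $\ge1$—and by invoking only the uniform Lipschitz \emph{upper} bound on the separation, which suffices precisely because $\a<0$. The transversal case $T_{x'}W_1'\pitchfork T_{x'}W_2'$ is the extremal one: there the singular locus $\{x=y\}$ in $W_1\times W_2$ is a single point of codimension $2m$, which is why the sharp threshold is $\a\le-2m$ rather than the weaker $\a<-m$ that would govern a tangential collision.
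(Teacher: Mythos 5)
Your proof follows the paper's strategy in its skeleton --- split $\Pf\int_M|x-y|^\a dy$ into the finite part over the local sheet through $x$, bounded by $b^0$ via Lemma \ref{lemma1}, plus a nonnegative remainder, and then force the cross term $\int_{W_1}\int_{W_2}|x-y|^\a$ to blow up uniformly as $\delta\to0$ --- but you execute the cross-term estimate differently. The paper decomposes $W_1\times W_2$ into dyadic pairs of annuli $A_i(2^{-j}\e_3)$, bounds each annulus volume from below by $c_1\rho^m$ via Lemma \ref{lemma_overline_b}, and sums $\lambda_\a(\delta,2^{-j}\e_3)$; you instead pull back to graph coordinates, use that the graph Jacobians are $\ge1$ (so no quantitative volume lemma is needed), and compute a single radial integral $\int_0^R(\sqrt2L\rho+2\delta)^\a\rho^{2m-1}d\rho$ in $\RR^{2m}$, which is the continuous analogue of the paper's dyadic sum and gives the same threshold $\a\le-2m$ with, arguably, less machinery. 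Your uniformity argument (the lower bound is an explicit function of $\delta$ depending only on $m,\a,b,\e_0$) is sound and matches the paper's explicit choice of $\delta_0=\e_3/2^{l_0}$. The one place you are too quick is the claim that for $x$ in the sheet through $p_1$ the principal sheet $P(x)$ is disjoint from the sheet through $p_2$ ``for $\delta$ small'': disjointness here is not a consequence of $\delta$ being small but of restricting to balls of a smaller radius controlled by the curvature bound --- the paper introduces $\e_2$ and works with $W_0(x,\e_3)$, $\e_3=\min\{\e_1,\e_2,1\}$, precisely to guarantee $W_0(x,\e_3)\subset W_1$ and hence disjointness from $W_0(x_2,\e_3)\subset W_2$. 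Your argument absorbs this fix without change (shrinking $R$ does not affect the divergence at $\rho=0$), so the gap is technical rather than structural, but it should be closed explicitly.
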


\begin{proof}
Lemma \ref{lemma_overline_b} implies that one can find a positive number $\e_1$ with $\e_1\le\e_0$ such that if $M'\in\mathcal{M}(\overline\alpha+3,\e_0,b,V)$ then for any $y'\in M'$ and for any $\rho$ with $0<\rho\le\e_1$ we have
\[
\mbox{Vol}\left((B_\rho(y')\setminus B_{\rho/2}(y'))\cap W_i'\right)
\ge 0.9\left(1-\left(\frac12\right)^m\right)\rho^m\,\mbox{Vol}(B^m) \quack 
\mbox{for all $i=1,\dots,l$},%\forall i,
\]
where $W_i'$ is as in Definition \ref{def_M(e,b,V)} and $B^m$ is a unit $m$-ball. 
Put $c_1=0.9\left(1-(1/2)^m\right)\mbox{Vol}(B^m)$ so that the right hand side above is given by $c_1\rho^m$. 

Let $W_0(x, \e)$ denote the connected component of $B_{\e}(x)\cap M$ that contains a point $x$ of $M$. 

Assume $M\in\mathcal{M}(\overline\alpha+3,\e_0,b,V)$ satisfies the conditions (1) and (2) of the theorem. 
Put $x_i={(f|_{W_i})}^{-1}(x')$ $\>(i=1,2)$. 
Since the curvature is bounded there is a positive number $\e_2=\e_2(\a,\e_0,b)$ such that $B_{r}(x_i)\cap W_i$ coincides with $W_0(x_i, r)$ if $0<r\le 2\e_2$ for each $i$. 

Put $\e_3=\min\{\e_1,\e_2\,,1\}$. Consider a pair of ``annuli'' 
\[
A_1(\rho)=(B_\rho(x_1)\setminus B_{\rho/2}(x_1))\cap W_1, \>\>
A_2(\rho)=(B_\rho(x_2)\setminus B_{\rho/2}(x_2))\cap W_2 \quack (0<\rho\le\e_3).
\]
Since $|x_1-x_2|<2\delta$, if $x\in A_1(\rho)$ and $y\in A_2(\rho)$ then $|x-y|<2\rho+2\delta$. 
Therefore the interaction energy between $A_1(\rho)$ and $A_2(\rho)$ can be estimated from below by 
\[
\int_{A_1(\rho)}\int_{A_2(\rho)}{|x-y|}^\a\,dxdy 
\ge {(2\rho+2\delta)}^\a\big(c_1\rho^m\big)^2
=2^\a c_1^2\,\rho^{2m+\a}\left(1+\frac\delta\rho\right)^\a.
\]
Let the right hand side above be denoted by $\lambda_\a(\delta,\rho)$. 
It is a decreasing function of $\delta$ if we fix $\rho$. 

Since $W_0(x_i,\e_3)=B_{\e_3}(x_i)\cap W_i$ $(i=1,2)$ includes the disjoint union of the interiors of $A_i(2^{-j}\e_3)$ $(j\ge0)$, which we denote by $\mbox{int}A_i(2^{-j}\e_3)$, we have 
\[
W_0(x_1,\e_3)\times W_0(x_2,\e_3) \supset \bigsqcup_{j\ge0} \mbox{int}A_1(2^{-j}\e_3)\times \mbox{int}A_2(2^{-j}\e_3).
\]
Remark that if $x\in W_0(x_1,\e_3)$ then $W_0(x,\e_3)$ and $W_0(x_2,\e_3)$ are disjoint because $W_0(x,\e_3)\subset W_0(x_1, 2\e_2)\subset W_1$, $W_0(x_2,\e_3)\subset W_0(x_2, 2\e_2)\subset W_2$, and $W_1$ and $W_2$ are disjoint by the assumption (2). 
Therefore, 
\begin{eqnarray}
E_\a(M)&=&\displaystyle \int_M\left(\int_{M\setminus W_0(x,\e_3)}{|x-y|}^\a dy+\Pf\int_{W_0(x,\e_3)} {|x-y|}^\a dy\right)dx \nonumber \\%[4mm]
&\ge&\displaystyle \int_{W_0(x_1,\e_3)}\left(\int_{W_0(x_2,\e_3)}{|x-y|}^\a dy\right)dx +\int_M\left(-b^0(\a,\e_3,b)\right)dx \nonumber
\\
&\ge&\displaystyle  \sum_{j=0}^\infty \lambda_\a\left(\delta,\frac{\e_3}{2^j}\right) -b^0(\a,\e_3,b)V, \label{f_lbv}
\end{eqnarray}
where $b^0(\a,\e_3,b)$ is the constant given in Lemma \ref{lemma1}. 
Put 
\[
l_0=\left\lceil \frac{C+b^0(\a,\e_3,b)V}{2^{2\a}c_1^2} \right\rceil \quack \mbox{ and } \quack \delta_0=\frac{\e_3}{2^{l_0}},
\]
then as $\e_3\le 1$ and $2m+\a\le0$ we have
\[\begin{array}{rcl}
\displaystyle 
\sum_{j=0}^{l_0} \lambda_\a\left(\delta_0,\frac{\e_3}{2^j}\right) 
&\ge&\displaystyle \sum_{j=0}^{l_0} \lambda_\a\left(\frac{\e_3}{2^j},\frac{\e_3}{2^j}\right) 
=2^{2\a}c_1^2 \sum_{j=0}^{l_0} {\left(\frac{\e_3}{2^j}\right)}^{2m+\a} 
> 2^{2\a}c_1^2 l_0\ge C+b^0(\a,\e_3,b)V, 
\end{array}
\]
which, together with \eqref{f_lbv}, implies $E_\a(M)>C$. 
\end{proof}

We remark that in the case of knots we did not need any bound on the curvature to show the self-repulsiveness (\cite{O1}). The author does not know if the self-repulsiveness can be shown without the condition (4) of $\mathcal{M}(\overline\alpha+3,\e_0,b,V)$ in Definition \ref{def_M(e,b,V)}.

%!!!!!!!!!!!!!!!!!!!!!!!!!!!!!!!!!!!!!!!!!!!!!!!!!!!!!!!!!!!!!!!!!!!!!
\subsubsection{Behavior of the energy of surfaces with double points}
%!!!!!!!!!!!!!!!!!!!!!!!!!!!!!!!!!!!!!!!!!!!!!!!!!!!!!!!!!!!!!!!!!!!!!
In the case of knots, $E_\a(K')$ is finite even if $K'$ has transversal self-intersection when $\a>-2$ \cite{O94}. 
Let us study in the case of surfaces whether $E_\a(M')$ is finite or not if $M'$ has a double point when $\a>-4$ according to the type of the double point. 

\medskip
(1) Suppose $n\ge4$ and $M'$ has an orthogonal self-intersection at a point $x'$. 
Then for a sufficientlly small positive number $\rho$, the order of contribution of $M\cap B_\rho(x')$ to the energy can be estimated by 
\[\begin{array}{rcl}
\displaystyle \int_0^\rho\int_0^\rho {\left(t^2+s^2\right)}^{\a/2} {(2\pi)}^2ts\,dsdt
&=&\displaystyle 4\pi^2 \,\rho^{\a+4}\int_0^1\int_0^1 {\left(t^2+s^2\right)}^{\a/2} ts\,dsdt \\[4mm]
&=&\displaystyle \left\{
\begin{array}{ll}
\displaystyle  8\pi^2 \,\rho^{\a+4}\frac{{\sqrt2}^{\,\a+2}-1}{(\a+2)(\a+4)} & \quack\mbox{ if }\a>-4, \\
\displaystyle \infty & \quack \mbox{ if }\a\le-4.
\end{array}
\right.
\end{array}
\]

Therefore, when the dimension $n$ of the ambient space is greater than or equal to $4$, $E_\a$ is not self-repulsive if $\a>-4$. 

\medskip
(2) Suppose $M$ has a tangential double point. For example, if a neighbourhood of the tangent point is isometric to that of the union of a unit sphere and a tangent plane (the codimension does not have to be $1$ here), the contribution of a small neighbourhood of the tangent point to the energy can be estimated, up to multiplication and summation by constants, by 
\[
\begin{array}{rcl}
\displaystyle \int_0^{R}\left(\int_0^{R}{\left(\left(\frac\rho2\right)^4+r^2\right)}^{\alpha/2}r\,dr\right)\rho\,d\rho
&\sim&\displaystyle \int_0^{R}\frac{\rho^{2\a+5}}{2^{\a+2}(\a+2)}\,d\rho+O(1), \quad 0<R\ll 1, 
\end{array}
\]
which is finite if and only if $\a>-3$, where we used a convention $\xi^0/0=\log\xi$ $(\xi>0)$ as before. 

\medskip
(3) Let us consider a degenerate case. Suppose $M$ has a cone singularity, say, of the type $x^2+y^2-z^2=0$. 
Then the interaction energy between the upper and lower cones can be estimated, up to multiplication by a constant, by 
\[
\int_0^1\left(\int_0^1 {(r+\rho)}^\a \rho\,d\rho\right)r\,dr
=\int_0^1\frac{r^{\a+3}}{(\a+1)(\a+2)}\,dr+O(1),
\]
which is finite if and only if $\a>-4$. 

\medskip
Assume $\a\le-4$. 
Suppose $\{M_j\}$ is a sequence of embedded surfaces that degenerate to a singular surface with a double point with respect to $C^0$-topology. 
Let $x_j$ and $y_j$ be points in $M_j$ such that the intrinsic (geodesical) distance between them in $M_j$ is bounded below by a positive constant and that $|x_j-y_j|$ tends to $+0$ as $j$ goes to $\infty$. 
Assume that $x_j$ and $y_j$ are at the tip of conical protrusions, say $C^1_j$ and $C^2_j$, whose ``hights'' are bounded below by a positive constant. 
The interaction energy between $C^1_j$ and $C^2_j$ (the contribution of $C^1_j\times C^2_j$ to $E_\a(M_j)$) can be bounded if the ``radii'' of $C_j^i$ $(i=1,2)$ go to $+0$ according as $|x_j-y_j|$ goes to $+0$, but then we conjecture that the energies of $C_j^i$ $(i=1,2)$ blow up by the argument for the energy $E_{AS}$ of a cylinder by Auckly and Sadun \cite{AS}. 

Thus we are lead to the following conjecture:

\begin{conjecture}\rm
The regularized $\a$-energy $E_\a$ for smooth closed surfaces %whose curvatures are not necessarily bounded 
is self-repulsive with respect to $C^0$-topology if and only inf $\a\le-4$, while $E_\a$ restricted to the space of surfaces in $\RR^3$ with curvature bounded by a constant $b$ $(b>0)$ is self-repulsive if and only inf $\a\le-3$. 
\end{conjecture}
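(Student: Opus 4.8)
The plan is to isolate the single mechanism that forces the energy to blow up --- the interaction between the two sheets of $M$ that are pushed close together near the double point $x'$ --- and to control everything else, in particular the regularized near-diagonal self-contributions, uniformly by Lemma \ref{lemma1}.

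First I would locate the two colliding pieces. Since $x'$ is a double point of $M'$, at least two of the components, say $W_1$ and $W_2$, satisfy $x'\in f(W_i)$; set $x_i=(f|_{W_i})^{-1}(x')$ for $i=1,2$. The hypothesis $|f(x)-x|<\delta$ gives $|x_i-x'|<\delta$, hence $|x_1-x_2|<2\delta$. I would then split the inner integral of $E_\a(M)$ at each $x$ into a convergent ``far'' part over $M\setminus W_0(x,\e)$, where $W_0(x,\e)$ denotes the connected component of $B_\e(x)\cap M$ through $x$ (so no regularization is needed there, the distance being bounded below by $\e$), and a regularized ``near'' part $\Pf\int_{W_0(x,\e)}|x-y|^\a\,dy$, for a suitable radius $\e\le\e_0$ dictated by the curvature bound.

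For the lower bound I would decompose neighbourhoods of $x_1$ in $W_1$ and of $x_2$ in $W_2$ into dyadic annuli $A_i(\rho)$, $\rho=2^{-j}\e$, and estimate the interaction of the matching pair $A_1(\rho)\times A_2(\rho)$. Condition (4) of Definition \ref{def_M(e,b,V)} bounds each annulus volume below by a multiple of $\rho^m$, while the triangle inequality bounds the mutual distance above by $2\rho+2\delta$. Since $\a<0$ this yields an interaction at least of order $\rho^{2m+\a}(1+\delta/\rho)^\a$ per scale; crucially this is a genuine convergent positive integral rather than a principal value, because the two sheets $W_1,W_2$ are disjoint and the integrand never sees the diagonal. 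As $2m+\a\le0$, these dyadic contributions do not decay, so summing over scales down to $\rho\sim\delta$ produces an arbitrarily large total as $\delta\to0$.

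Finally I would absorb the remainder. Lemma \ref{lemma1} supplies a bound $b^0$, uniform over $\mathcal{M}(\overline\a+3,\e_0,b,V)$ and over $x$, on $\big|\Pf\int_{W_0(x,\e)}|x-y|^\a\,dy\big|$, so this part contributes at most $b^0V$ in absolute value after integrating over $M$, and every other (far, positive) term only helps. Choosing the number of dyadic scales $l_0$ large in terms of $C,b^0,V$ and the volume constant, and then setting $\delta=\e/2^{l_0}$ so that $\delta/\rho\le1$ on all retained scales, forces the summed interaction to exceed $C+b^0V$, whence $E_\a(M)>C$. The main obstacle is exactly the sign of the regularized finite part: $\Pf$ of the self-energy can be very negative, so no naive positivity argument works, and the whole scheme rests on the \emph{uniform} a priori control of this self-contribution provided by Lemma \ref{lemma1} (which in turn uses condition (4)). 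I expect the delicate point to be verifying that the blowing-up interaction is genuinely decoupled from the regularized pieces --- i.e.\ that $W_0(x,\e)$ for $x\in W_1$ stays inside $W_1$ and disjoint from $W_2$ --- which is where the curvature bound and the structure of $\mathcal{M}$ must be invoked with care.
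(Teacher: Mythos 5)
The statement you are proving is labelled a \emph{conjecture} in the paper, and the paper offers no proof of it --- only heuristic evidence (the three model computations for orthogonal, tangential and conical double points) and the already-established Theorem \ref{theorem1}. What you have written is, essentially, a faithful reconstruction of the proof of Theorem \ref{theorem1}: the dyadic annuli $A_i(2^{-j}\e_3)$, the lower bound $c_1\rho^m$ on their volumes, the upper bound $2\rho+2\delta$ on mutual distances, the choice $\delta=\e_3/2^{l_0}$, and the absorption of the regularized self-contribution via the uniform bound $b^0$ of Lemma \ref{lemma1}. That argument is correct, but it proves a strictly weaker statement than the conjecture, for three reasons.

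First, your argument only gives the ``if'' direction of the first claim, and only on the class $\mathcal{M}(\overline\a+3,\e_0,b,V)$: both the volume lower bound $c_1\rho^m$ for the annuli and the uniform bound $b^0$ on $\Pf\int_{W_0(x,\e)}|x-y|^\a\,dy$ rest on condition (4) of Definition \ref{def_M(e,b,V)} (uniform $C^k$ graph bounds) and on the volume bound $V$. The conjecture asserts self-repulsiveness for \emph{all} smooth closed surfaces with respect to the $C^0$-topology, with no a priori curvature or volume control; the paper explicitly states that the author does not know how to remove condition (4). Second, you say nothing about the ``only if'' directions, which require exhibiting degenerating families with bounded energy for $\a>-4$ (resp.\ $\a>-3$); the paper's computations (1)--(3) are only local models and are acknowledged as incomplete, e.g.\ the conjectured blow-up of the energy of shrinking conical protrusions via the Auckly--Sadun cylinder argument. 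Third, and most concretely, your mechanism cannot reach the threshold $-3$ claimed for bounded-curvature surfaces in $\RR^3$: the dyadic sum $\sum_j \rho_j^{\,2m+\a}$ with $\rho_j=2^{-j}\e_3$ diverges only when $2m+\a\le0$, i.e.\ $\a\le-4$ for $m=2$. For $\a\in(-4,-3]$ the pairwise-annulus interaction is summable, so a genuinely different idea (presumably exploiting the tangential geometry forced by bounded curvature, as in the paper's computation (2)) is needed, and neither you nor the paper supplies it.
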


%!!!!!!!!!!!!!!!!!!!!!!!!!!!!!!!!!!!!!!!!!!!!!!!!!!!!!!!!!!!!!!!!!!!!!
\subsection{Auckly-Sadun's regularized surface energy}
%!!!!!!!!!!!!!!!!!!!!!!!!!!!!!!!!!!!!!!!!!!!!!!!!!!!!!!!!!!!!!!!!!!!!!
Let us first introduce some of the results of Auckly and Sadun \cite{AS}. 
Let $M$ be a $2$-dimensional closed submanifold of $\RR^n$ $(n\ge3)$ and $h$ be the second fundamental form of $M$, i.e., $h(X,Y)={(\nabla_X Y)}^\perp$ $(X,Y\in T_xM)$, where $\nabla$ is the Levi-Civita connection. 
Let $x$ be a point on $M$ and $h_{ij}$ $(i,j=1,2)$ be given by $h_{ij}=h(\vect e_i, \vect e_j)$, where $\{\vect e_1, \vect e_2\}$ is an orthonormal basis of $T_xM$. 
Put 
\[
\Delta(x)={|h_{11}-h_{22}|}^2+4{|h_{12}|}^2 \quad \mbox{ and } \quad
K(x)=\langle h_{11}, h_{22}\rangle-{|h_{12}|}^2.
\]
When $M$ is a surface in $\RR^3$, $\Delta(x)=(\kappa_1-\kappa_2)^2$ and $K(x)=\kappa_1\kappa_2$ is the Gauss curvature, where $\kappa_1$ and $\kappa_2$ are principal curvatures of $M$ at $x$. 

From a viewpoint of differential geometry, if we put $H$ to be the mean curvature vector $H=\mbox{\rm trace}\, h=h_{11}+h_{22}$ and $\Vert h\Vert$ to be the Hilbert-Schmidt norm, ${\Vert h\Vert}^2=\sum_{i,j}{\vert h_{ij}\vert}^2$, $\Delta(x)$ and $K(x)$ are given by 
\[
\Delta(x)=2{\Vert h\Vert}^2-{\vert H\vert}^2  \quad \mbox{ and } \quad 
K(x)=\frac12\Big({\vert H\vert}^2 - {\Vert h\Vert}^2\Big).
\] 
When $M$ is a surface in $\RR^3$, $H=(\kappa_1+\kappa_2)\vect n$, where $\vect n$ is a unit normal vector to $M$, and ${\Vert h\Vert}^2={\kappa_1}^2+{\kappa_2}^2$.

Then (2.9) of \cite{AS} implies 
\begin{equation}\label{AS-Pf}
%\Pf\int_M \frac{dy}{|x-y|^4}=\lim_{\e\to0}\left(\int_{M, |x-y|\ge\e}\frac{dy}{|x-y|^4}-\frac\pi{\e^2}+\frac{\pi(\kappa_1-\kappa_2)^2}{8}\log\e\right)
%
\Pf\int_M \frac{dy}{|x-y|^4}=\lim_{\e\to0}\left(\int_{M, |x-y|\ge\e}\frac{dy}{|x-y|^4}-\frac\pi{\e^2}+\frac{\pi\Delta(x)}{8}\log\e\right). 
\end{equation}
Auckly and Sadun gave a one-parameter family of M\"obius invariant functionals $\{E_{AS}^{\,s}\}_{s\in\RR}$ by 
\[
\begin{array}{rcl}
\displaystyle 
%\int_M\left(\Pf\int_M\frac{dy}{|x-y|^4}\right)dx +\frac{\pi}{16}\int_M(\kappa_1-\kappa_2)^2\log (\kappa_1-\kappa_2)^2dx+\frac{\pi^2}2\chi(M)
%E_{AS}^s(M)=\int_M\left(\Pf\int_M\frac{dy}{|x-y|^4}\right)dx +\int_M \left(\frac{\pi}{16}\Delta(x)\log \Delta(x)+\frac\pi4K(x)\right)\,dx
E_{AS}^{\,s}(M)&=&\displaystyle \int_M\left(\lim_{\e\to0}\left(\int_{M, |x-y|\ge\e}\frac{dy}{|x-y|^4}-\frac\pi{\e^2}+\frac{\pi\Delta(x)}{16}\log\Delta(x)\e^2\right)+\frac\pi4K(x)\right)\,dx \\[4mm]
&&\displaystyle +s\int_M\Delta(x)\,dx
\end{array}
\]
((2.11)-(2.13) of \cite{AS}), which, together with \eqref{def_of_E_alpha} and \eqref{AS-Pf}, implies 
\begin{eqnarray}
E_{AS}^{\,s}(M)
&=&\displaystyle E_{-4}(M)
+\frac{\pi}{16}\int_M\Delta(x)\log \Delta(x)\,dx+\frac{\pi^2}2\chi(M)+s\int_M\Delta(x)\,dx, 
\label{E_AS_E4}
\end{eqnarray}
where $\chi(M)$ is the Euler characteristic. 
We remark that the second term of \eqref{E_AS_E4} was added to make $E_{AS}^{\,s}$ M\"obius invariant, and that $E_{-4}(M)$ is not M\"obius invariant as was pointed out in \cite{OS2}. 

Now the self-repulsiveness in the sense of Theorem \ref{theorem1} of Auckly-Sadun's surface energy follows from that of the regularized Riesz energy $E_{-4}$.

%!!!!!!!!!!!!!!!!!!!!!!!!!!!!!!!!!!!!!!!!!!!!!!!!!!!!!!!!!!!!!!!!!!!!!
\subsection{Kusner-Sullivan's $\vect{(1-\cos\theta)^m}$ energy}
%!!!!!!!!!!!!!!!!!!!!!!!!!!!!!!!!!!!!!!!!!!!!!!!!!!!!!!!!!!!!!!!!!!!!!
%
Let $M$ be an $m$-dimensional oriented closed submanifold of class $C^2$ in $\RR^n$. 
For a pair of points of $M$, $x$ and $y$, we define a M\"obius invariant angle $\theta=\theta_M(x,y)$ as follows. 

Let $\Sigma_x(y)$ be an $m$-sphere which is tangent to $M$ at $x$ that passes through $y$. Remark that $\Sigma_x(y)$ is uniquely determined. 
Suppose $\Sigma_x(y)$ is endowed with the orientation that coincides with that of $M$ at point $x$. 
Put $\Pi_y=T_yM$ and $\Pi_x=T_y\Sigma_x(y)$. 
Let $u_1,\dots, u_m$ and $v_1,\dots, v_m$ be ordered orthonormal bases of $\Pi_x$ and $\Pi_y$ that give positive orientations of $\Pi_x$ and $\Pi_y$ respectively. 
Let $\theta_M(x,y)$ be the angle between $u_1\wedge \dots \wedge u_m$ and $v_1\wedge \dots \wedge v_m$ in the Grassmannian, i.e. 
\begin{equation}\label{combined_angle}
\cos\theta_M(x,y)=\langle u_1\wedge \dots \wedge u_m, v_1\wedge \dots \wedge v_m\rangle 
=\det(\langle u_i, v_j\rangle ).  
\end{equation}
Note that $\theta_M(x,y))$ is symmetric in $x$ and $y$. 
The angle $\theta_M(x,y)$ is called {\it combined angle} in \cite{KS}. 
Now {\it Kusner-Sullivan's energy} \cite{KS} is given by 
\[
E_{KS}(M)=\iint_{M\times M}\frac{{(1-\cos\theta_M(x,y))}^m}{{|x-y|}^{2m}}\,dxdy.
\]
It is a natural generalization of the {cosine formula} of the energy for knots by Doyle and Schramm (\cite{AS,KS}). 
We show the self-repulsiveness under the assumption of bounded geometry.

\begin{lemma}\label{eigenvalues}
\begin{enumerate}
\item The eigenvalues of a matrix $A=(\a_i\a_j)_{1\le i,j\le m}$, where $\a_1,\dots,\a_m$ are real numbers, are given by $\a_1^2+\dots+\a_m^2,0,\dots,0$.
\item Suppose $w_1,\dots,w_m$ is an ordered basis of $\Pi_y$ that gives the positive orientation. Then 
\[
%\cos\theta_M(x,y)
\det(\langle u_i, v_j\rangle)=\frac{\det(\langle u_i, w_j\rangle)}{\sqrt{\det(\langle w_i, w_j\rangle)}}. %,
\]
\end{enumerate}
\end{lemma}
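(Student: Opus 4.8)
The plan is to prove the two parts by elementary linear algebra, the only delicate point being an orientation argument in part (2).

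For part (1), I would introduce the column vector $a=(\a_1,\dots,\a_m)^{T}$, so that $A=a\,a^{T}$ is a rank-one matrix. A direct computation gives $Aa=a\,(a^{T}a)=(\a_1^2+\dots+\a_m^2)\,a$, exhibiting $a$ as an eigenvector with eigenvalue $\a_1^2+\dots+\a_m^2$. Every vector orthogonal to $a$ lies in $\ker A$, contributing the eigenvalue $0$ with multiplicity $m-1$. Since $A$ is symmetric these exhaust the spectrum, which is the assertion.

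For part (2), I would express the basis $w_1,\dots,w_m$ of $\Pi_y$ in the orthonormal basis $v_1,\dots,v_m$, writing $w_j=\sum_{k}P_{kj}v_k$ for a change-of-basis matrix $P$. Two observations then follow. First, $\langle u_i,w_j\rangle=\sum_k\langle u_i,v_k\rangle P_{kj}$, so in matrix form $(\langle u_i,w_j\rangle)=(\langle u_i,v_j\rangle)\,P$ and hence $\det(\langle u_i,w_j\rangle)=\det(\langle u_i,v_j\rangle)\,\det P$. Second, the orthonormality of the $v_k$ gives $\langle w_i,w_j\rangle=(P^{T}P)_{ij}$, whence $\det(\langle w_i,w_j\rangle)=(\det P)^2$ and $\sqrt{\det(\langle w_i,w_j\rangle)}=|\det P|$. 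Dividing the first relation by this square root yields
\[
\frac{\det(\langle u_i,w_j\rangle)}{\sqrt{\det(\langle w_i,w_j\rangle)}}=\frac{\det(\langle u_i,v_j\rangle)\,\det P}{|\det P|}.
\]

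The crucial and in fact only nonroutine step is to fix the sign. Since both $v_1,\dots,v_m$ and $w_1,\dots,w_m$ are prescribed to give the positive orientation of $\Pi_y$, the change-of-basis matrix $P$ has $\det P>0$, so $|\det P|=\det P$ and the factor cancels, leaving the stated identity with right-hand side $\det(\langle u_i,v_j\rangle)$. This orientation hypothesis is precisely what promotes the identity from true-up-to-sign to the exact equality claimed; everything else is routine matrix bookkeeping, so I expect no genuine obstacle beyond this sign check.
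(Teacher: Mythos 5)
Your proof is correct and follows essentially the same route as the paper: part (1) via the rank-one structure $A=aa^{T}$ with eigenvector $a$ and the orthogonal complement in the kernel, and part (2) via a change-of-basis matrix between $\{v_j\}$ and $\{w_j\}$ whose determinant is pinned down to be positive by the shared orientation and whose square equals $\det(\langle w_i,w_j\rangle)$ by orthonormality of the $v_k$. The paper merely writes the transition in the opposite direction ($v=wA$ rather than $w=vP$) and phrases it with wedge products, which is the same computation.
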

\begin{proof}
The vectors in the Euclidean spaces are considered as column vectors here. \\ \indent
(1) Put $\vect \a={}^t(\a_1,\dots,\a_m)$. Assume $\vect \a\ne\vect0$ as otherwise it is trivial. 
Let $\vect x={}^t(x_1,\dots,x_m)\in \RR^m$. Since
\[
A\vect x=\left(\begin{array}{c}
\a_1\\
\vdots \\
\a_m
\end{array}
\right)
\left(\a_1 \dots \a_m\right)
\left(\begin{array}{c}
x_1\\
\vdots \\
x_m
\end{array}
\right)
=\biggl(\sum_k\a_kx_k
\biggr)
\left(\begin{array}{c}
\a_1\\
\vdots \\
\a_m
\end{array}
\right),
\]
if $\vect x$ is an eigenvector, then either $\vect x$ is a nonzero constant times $\vect \a$ with eigenvalue ${|\vect \a|}^2$ or $\vect x$ is orthogonal to $\vect \a$ with eigenvalue $0$, which completes the proof. \\ 
\indent
(2) We have $(v_1,\dots,v_m)=(w_1,\dots,w_m)A$ for some non-singular matrix $A$ with positive determinant. Then, $v_1\wedge \dots \wedge v_m=|A|w_1\wedge \dots \wedge w_m$ and $|A|^2 \det(\langle w_i, w_j\rangle)=1$. 
\end{proof}
\begin{lemma}\label{conformal_angle_estimate} 
Suppose $M$ is locally expressed as a graph of a function 
\[
h\colon U\to {(T_{x}M)}^\perp\cong\RR^{n-m}, \quack (\vect 0\in U\subset T_{x}M\cong\RR^{m}). 
\]
Then $1-\cos\theta_M(x,y)$ which appears in the numerator of the integrand of $E_{KS}$ is of the order of $|x-y|^2$ and 
$|1-\cos\theta_M(x,y)|/|x-y|^2$, where $y=(\eta, h(\eta))$ for some $\eta\in U$, can be estimated by 
$\sup_{\eta', \mu} |\partial^\mu h(\eta')|$, where $\eta'\in U$ and $\mu$ is a multi-index with $|\mu|= 2$. 
\end{lemma}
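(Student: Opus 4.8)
The plan is to choose adapted coordinates, write down the osculating sphere $\Sigma_x(y)$ and its tangent plane $\Pi_x$ explicitly, and then read off the combined angle from the generalized Gram-determinant formula, estimating every matrix entry by the second derivatives of $h$. First I would place $x$ at the origin and identify $T_xM$ with $\RR^m\times\{0\}$, so that $h(\vect0)=\vect0$ and $Dh(\vect0)=0$; then $x=\vect0$ and $y=(\eta,h(\eta))$. By Taylor's theorem there is a dimensional constant $c_m$ with $|\partial_jh(\eta)|\le c_mC|\eta|$ and $|h(\eta)|\le c_mC|\eta|^2$, where $C=\sup_{\eta',|\mu|=2}|\partial^\mu h(\eta')|$. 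In particular $|x-y|^2=|\eta|^2+|h(\eta)|^2\ge|\eta|^2$, so an upper bound of the form $|1-\cos\theta_M(x,y)|\le(\text{const})\,C^2|\eta|^2$ immediately gives the assertion after dividing by $|x-y|^2$.

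Next I would describe $\Sigma_x(y)$ geometrically. Being an $m$-sphere tangent to $\RR^m\times\{0\}$ at the origin, it lies in the $(m+1)$-plane $V=\RR^m\times\RR N$ with $N=h(\eta)/|h(\eta)|$, has its center at $RN$, and radius $R=|y|^2/(2|h(\eta)|)$. Its tangent space $\Pi_x=T_y\Sigma_x(y)$ is the orthogonal complement inside $V$ of the radial vector $y-RN$; a short computation gives $y-RN=(\eta,\beta N)$ with $\beta=(|h(\eta)|^2-|\eta|^2)/(2|h(\eta)|)$, so that $\Pi_x$ carries the basis $p_j=(e_j,-(\eta_j/\beta)N)$ ($j=1,\dots,m$). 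For $\Pi_y=T_yM$ I would use the coordinate basis $w_j=(e_j,\partial_jh(\eta))$. For $y$ near $x$ both bases are close to the standard basis of $\RR^m\times\{0\}$, hence positively oriented, so the generalized formula for the cosine of the angle between two oriented $m$-planes,
\[
\cos\theta_M(x,y)=\frac{\det(\langle p_i,w_j\rangle)}{\sqrt{\det(\langle p_i,p_j\rangle)}\,\sqrt{\det(\langle w_i,w_j\rangle)}},
\]
applies; it follows from Lemma \ref{eigenvalues}(2) applied symmetrically to both planes. Here Lemma \ref{eigenvalues}(1), with $\a_i=\eta_i/\beta$, evaluates the first denominator as $\det(\delta_{ij}+\eta_i\eta_j/\beta^2)=1+|\eta|^2/\beta^2$, while $\langle w_i,w_j\rangle=\delta_{ij}+\langle\partial_ih,\partial_jh\rangle$ and $\langle p_i,w_j\rangle=\delta_{ij}-(\eta_i/\beta)\langle N,\partial_jh\rangle$.

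The heart of the estimate is to show that each of these three matrices equals $I$ plus a correction of size $O(C^2|\eta|^2)$. The main obstacle is controlling the factor $1/\beta$, which is at first sight singular as $|h(\eta)|\to0$. Using the upper bound $|h(\eta)|\le c_mC|\eta|^2$ one obtains, for $|\eta|$ small, the lower bound $|\beta|=(|\eta|^2-|h(\eta)|^2)/(2|h(\eta)|)\ge1/(2c_mC)$, so that $|\eta_j/\beta|\le2c_mC|\eta|$ is $O(C|\eta|)$ rather than singular. Combined with $|\partial_jh(\eta)|\le c_mC|\eta|$ and $|\langle N,\partial_jh\rangle|\le|\partial_jh|$, every entry of all three correction matrices is $O(C^2|\eta|^2)$; hence all three determinants are $1+O(C^2|\eta|^2)$ and therefore $\cos\theta_M(x,y)=1+O(C^2|\eta|^2)$.

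Consequently $|1-\cos\theta_M(x,y)|\le(\text{const depending only on }m)\,C^2|\eta|^2\le(\text{const})\,C^2|x-y|^2$, which is exactly the assertion, the ratio being controlled by a dimensional constant times the square of $\sup_{\eta',|\mu|=2}|\partial^\mu h(\eta')|$. The degenerate case $h(\eta)=\vect0$, where $\Sigma_x(y)$ is the plane $\RR^m\times\{0\}$ itself, is recovered as the limit $\eta_j/\beta\to0$ and requires no separate treatment. The one point I would verify with care is precisely the uniform lower bound on $|\beta|$: it is what guarantees that $\Pi_x$ stays $C^2$-close to $\RR^m\times\{0\}$ and, in turn, that no first-order-in-$|\eta|$ term survives in $1-\cos\theta_M(x,y)$, leaving the genuinely quadratic behavior claimed.
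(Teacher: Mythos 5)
Your argument is correct, and it reaches the same quantitative conclusion ($1-\cos\theta_M=O(C^2|\eta|^2)$ with $C=\sup_{|\mu|=2}|\partial^\mu h|$, hence the ratio bounded by second derivatives after dividing by $|x-y|^2\ge|\eta|^2$), but it takes a genuinely different route from the paper. You construct $\Sigma_x(y)$ explicitly — its containing $(m+1)$-plane $\RR^m\times\RR N$, center $RN$, radius $R=|y|^2/(2|h(\eta)|)$ — and realize $\Pi_x=T_y\Sigma_x(y)$ as the orthogonal complement of the radius vector, which forces you to (i) normalize by \emph{two} Gram determinants, (ii) settle the orientation of the basis $p_j$ by a continuity argument, and (iii) tame the a priori singular factor $1/\beta$; your observation that $|h(\eta)|\le c_mC|\eta|^2$ converts $|\beta|^{-1}$ into the small quantity $O(C)$ (so $|\eta_j/\beta|=O(C|\eta|)$) is exactly the right point to check, and it works. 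The paper instead never touches the center or radius: it uses the inversive-geometry fact that $T_y\Sigma_x(y)$ is the reflection of $T_xM$ in the line through $x$ and $y$, so $\hat e_i=2\langle e_i,v\rangle v-e_i$ is already an \emph{orthonormal} basis of $\Pi_x$ (up to the sign $(-1)^{m-1}$, computed exactly by Lemma \ref{eigenvalues}(1) applied to $2\eta_i\eta_j|\eta|^{-2}-\delta_{ij}$), leaving only the single Gram normalization of Lemma \ref{eigenvalues}(2) and no singular denominators at all. What each approach buys: the reflection trick is shorter, sign-exact for all $m$, and degenerates gracefully when $h(\eta)=0$; your computation is more elementary and self-contained (no appeal to the reflection property of tangent spheres), at the price of the $1/\beta$ bookkeeping and a softer, small-$|\eta|$-only orientation argument — which is, however, the only regime the lemma is used in.
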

\begin{proof}
We may assume that $x=\vect 0$, $T_xM=\RR^m$ and a neighbourhood of $x=\vect 0$ of $M$ is given as a graph of $h\colon U\to\RR^{n-m}$ $(\vect 0\in U\subset \RR^m)$. 
Let $e_i$ be the canonical unit vector. 
Suppose $y={}^t(\eta,h(\eta))$. %$H(\eta)={}^t(\eta,h(\eta))$ and $y=H(\eta)$. 
Let $v$ be a unit vector in the direction of $y-x$. 
Suppose the orientation of $M$ agrees with the ordered basis $e_1,\dots,e_m$ at $x=\vect 0$. 
Let $\hat e_i$ be a vector symmetric to $e_i$ with respect to the line through $x$ and $y$: 
\[
\hat e_i=2\langle e_i,v\rangle v-e_i
=\frac{2 \eta_i}{|\eta|^2+|h(\eta)|^2}\left(
\begin{array}{c}
\eta \\
h(\eta)
\end{array}
\right)
-\left(
\begin{array}{c}
e_i \\
0
\end{array}
\right)
\qquad 1\le i\le m.
\]
Then $\hat e_1,\dots, \hat e_m$ is $(-1)^{m-1}$ times the ordered orthonormal basis of $\Pi_x=T_y\Sigma_x(y)$ that gives the positive orientation. 
On the other hand, if we put $w_j={}^t(e_j,\partial_jh(\eta))$ then $w_1,\dots,w_m$ is an ordered basis that gives the positive orientation of $\Pi_y$. 
We have 
\begin{eqnarray}
\displaystyle \langle \hat e_i, w_j\rangle
&=&\displaystyle \frac{2 \eta_i\big(\eta_j+\big\langle\partial_jh(\eta), h(\eta)\big\rangle\big)}{|\eta|^2+|h(\eta)|^2}-\delta_{ij}
=\frac{2 \eta_i\eta_j}{|\eta|^2}-\delta_{ij}+O(|\eta|^2), \label{ehatw} \\[2mm]
\displaystyle \langle \hat w_i, w_j\rangle&=&\displaystyle \big\langle\partial_ih(\eta), \partial_jh(\eta)\big\rangle+\delta_{ij}
=\delta_{ij}+O(|\eta|^2). \label{ww}
\end{eqnarray}
Since Lemma \ref{eigenvalues} (1) implies that the eigenvalues of a matrix $\big({2 \eta_i\eta_j}{|\eta|^{-2}}-\delta_{ij}\big)$ are $1,-1,\dots,-1$, \eqref{combined_angle} and Lemma \ref{eigenvalues} (2) imply 
\[
\begin{array}{rcl}
\cos\theta_M(\vect 0,(\eta,h(\eta)))&=&\displaystyle (-1)^{m-1}\frac{\det(\langle \hat e_i, w_j\rangle)}{\sqrt{\det(\langle w_i, w_j\rangle)}} 
=1-O(|\eta|^2). %\\[2mm]
\end{array}
\]
Since $\partial_kh(0)=0$ for any $k$ $(1\le k\le m)$, both $|\partial_jh(\eta)|/|\eta|$ and $|h(\eta)|/|\eta|^2$ can be estimated by $|\partial ^\mu h|$ with $|\mu|=2$, which implies that both 
\[
\frac1{|\eta|^2}\left[
\langle \hat e_i, w_j\rangle
-\left( \frac{2 \eta_i\eta_j}{|\eta|^2}-\delta_{ij}\right)\right]
\quad\mbox{ and }\quad
\frac1{|\eta|^2}\left(
\langle \hat w_i, w_j\rangle
-\delta_{ij}\right)
\]
can be estimated by $|\partial ^\mu h|$ with $|\mu|=2$. 
Therefore 
\[
\frac1{|\eta|^2}\left(1-\cos\theta_M(\vect 0,(\eta,h(\eta)))\right)
\]
can be estimated by $|\partial ^\mu h|$ with $|\mu|=2$, which completes the proof. 
\end{proof}
Note that $\theta_M(x,y)$ can be controlled by the $C^1$-topology on the off-diagonal set when $|x-y|$ is bounded below by a positive constant. 
\begin{corollary}
The combined angle $\theta_M(x,y)$ is of the order of $|x-y|$. % and is continuous with respect to the $C^2$-topology. 
\end{corollary}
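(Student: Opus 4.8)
The plan is to deduce the corollary directly from the preceding Lemma \ref{conformal_angle_estimate}, which already furnishes the quadratic estimate $1-\cos\theta_M(x,y)=O(|x-y|^2)$ with implicit constant controlled by $\sup_{|\mu|=2}|\partial^\mu h|$. The only thing left is the elementary passage from an estimate on $1-\cos\theta_M(x,y)$ to an estimate on $\theta_M(x,y)$ itself, and for that I need a matching lower bound of $1-\cos\theta$ in terms of $\theta^2$.

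First I would record that the combined angle satisfies $\theta_M(x,y)\in[0,\pi]$, since by \eqref{combined_angle} it is defined through $\cos\theta_M(x,y)=\det(\langle u_i,v_j\rangle)\in[-1,1]$. Next I would note that $1-\cos\theta_M(x,y)\to0$ as $|x-y|\to0$ (by the Lemma), so for $|x-y|$ small the half-angle $\theta_M(x,y)/2$ lies in $[0,\pi/2]$, where Jordan's inequality $\sin s\ge (2/\pi)s$ applies. Using the half-angle identity $1-\cos\theta=2\sin^2(\theta/2)$ I would then obtain the lower bound
\[
1-\cos\theta_M(x,y)=2\sin^2\!\Big(\tfrac{\theta_M(x,y)}2\Big)\ge\frac{2}{\pi^2}\,\theta_M(x,y)^2.
\]

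Combining this with the upper bound $1-\cos\theta_M(x,y)\le C\,|x-y|^2$ from Lemma \ref{conformal_angle_estimate} yields $\theta_M(x,y)^2\le (\pi^2 C/2)\,|x-y|^2$, hence $\theta_M(x,y)\le \pi\sqrt{C/2}\,|x-y|=O(|x-y|)$, as claimed. There is essentially no obstacle: the geometric content is carried entirely by Lemma \ref{conformal_angle_estimate}, and the remaining step is the routine trigonometric comparison above. The only point that deserves a word is verifying that $\theta_M(x,y)$ stays in the range where $1-\cos\theta$ is genuinely comparable to $\theta^2$, but as noted this is automatic since the Lemma forces $\theta_M(x,y)\to0$ as $|x-y|\to0$.
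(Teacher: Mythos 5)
Your argument is correct and matches the paper's intent: the corollary is stated without proof as an immediate consequence of Lemma \ref{conformal_angle_estimate}, and your routine trigonometric comparison ($1-\cos\theta=2\sin^2(\theta/2)\ge(2/\pi^2)\theta^2$ via Jordan's inequality, valid for all $\theta\in[0,\pi]$) is exactly the step being left implicit. Nothing is missing.
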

\begin{remark}\rm 
When $m=1$, i.e. in the case of knots $K$, $\theta_K(x,y)$ is of the order of $|x-y|^2$ (cf. \cite{LO} Lemma 5.3). It does not hold when $m\ge2$. For example, consider a graph of $h(x,y)=xy$, then $\theta_M(\vect 0, (t,t))\approx t$ $(0\le t\ll 1)$. 
\end{remark}
\begin{corollary}
Kusner-Sullivan's energy $E_{KS}$ is continuous with respect to the $C^2$-topology \jb{on the space of embedded submanifolds}. 
\end{corollary}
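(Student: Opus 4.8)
The plan is to pull the energy back to a fixed parameter manifold and invoke the dominated convergence theorem, with Lemma \ref{conformal_angle_estimate} used precisely to tame the diagonal. Fix a closed manifold $M$ and an embedding $f\colon M\to\RR^n$. Since $E_{KS}$ depends only on the image $g(M)$, it is invariant under reparametrization and descends to the quotient $\mathrm{Imm}^k_q(M,\RR^n)/\mathrm{Diff}^k_q(M)$, so it suffices to prove that the map $g\mapsto E_{KS}(g(M))$ is continuous at $f$ for the $C^2$-topology on embeddings. Writing $dV_g$ for the volume form induced by $g$, I pull the integral back to a fixed domain,
\[
E_{KS}(g(M))=\iint_{M\times M} G_g(p,q)\,dp\,dq,\qquad
G_g(p,q)=\frac{\bigl(1-\cos\theta_{g(M)}(g(p),g(q))\bigr)^m}{|g(p)-g(q)|^{2m}}\,J_g(p)\,J_g(q),
\]
where $J_g$ is the volume distortion of $g$. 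It then suffices to show that along any sequence $g\to f$ in $C^2$ the integrand $G_g$ converges pointwise to $G_f$ off the diagonal while staying below a fixed integrable dominating function.

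For the pointwise convergence away from the diagonal, fix $p\ne q$; since $f$ is an embedding we have $|f(p)-f(q)|>0$. The combined angle $\theta_{g(M)}(g(p),g(q))$ is determined by the two points $g(p),g(q)$ together with the tangent planes $T_{g(p)}(g(M))$ and $T_{g(q)}(g(M))$ (the auxiliary sphere $\Sigma_{g(p)}(g(q))$ being fixed by $T_{g(p)}(g(M))$ and the two points), hence only by the $1$-jet of $g$ at $p$ and $q$; the same is true of the Jacobians $J_g$. This is exactly the off-diagonal $C^1$-control of $\theta_M$ noted just before the Corollary, and it gives $G_g(p,q)\to G_f(p,q)$ as $g\to f$ in $C^1$, a fortiori in $C^2$, for each fixed $p\ne q$.

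For the uniform domination I split $M\times M$ according to the intrinsic distance $d_M(p,q)$. Cover $M$ by finitely many charts in which $g(M)$ is a graph of a $C^2$ function $h_g$ over $T_{g(p)}(g(M))$; on the region where $p,q$ are intrinsically close, Lemma \ref{conformal_angle_estimate} bounds $\bigl(1-\cos\theta_{g(M)}(g(p),g(q))\bigr)/|g(p)-g(q)|^{2}$ by $C\sup_{|\mu|=2}|\partial^\mu h_g|$, so the $m$-th power controls the whole integrand. On the complementary region, where $d_M(p,q)$ is bounded below, the embedding property of $f$ and compactness force $|f(p)-f(q)|$ to stay above a positive constant, and by uniform $C^2$-closeness the same lower bound (with a slightly smaller constant) holds for all $g$ near $f$; there $G_g\le 2^m/\delta^{2m}\cdot(\sup J_g)^2$. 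Since $\sup_{|\mu|=2}|\partial^\mu h_g|$ and $\sup J_g$ are uniformly bounded over a $C^2$-neighbourhood of $f$ and over the finitely many charts, there is a constant $\Lambda$ with $0\le G_g(p,q)\le\Lambda$ for all such $g$ and all $(p,q)\in M\times M$.

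Because the diagonal has measure zero in $M\times M$, the two steps give $G_g\to G_f$ almost everywhere with $G_g\le\Lambda\in L^1(M\times M)$, so the dominated convergence theorem yields $E_{KS}(g(M))\to E_{KS}(f(M))$ and hence continuity in the $C^2$-topology. The main obstacle is the diagonal, where the integrand is formally of the indeterminate type $|x-y|^{2m}/|x-y|^{2m}$; the whole argument hinges on converting it into a genuinely bounded quantity uniformly over the entire $C^2$-neighbourhood, which is exactly what Lemma \ref{conformal_angle_estimate} delivers. The only real care needed is bookkeeping: arranging the graph representations and the estimate of Lemma \ref{conformal_angle_estimate} to hold uniformly across the finitely many charts and across all embeddings $g$ sufficiently $C^2$-close to $f$.
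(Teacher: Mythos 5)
Your proof is correct and follows essentially the route the paper intends: the corollary is stated without an explicit proof precisely because it is the combination of Lemma~\ref{conformal_angle_estimate} (uniform boundedness of the integrand near the diagonal in terms of second derivatives of the local graph representations) with the remark immediately preceding it (off-diagonal $C^1$-control of $\theta_M$), assembled via dominated convergence exactly as you do. Your write-up simply makes explicit the bookkeeping the paper leaves to the reader.
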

\jb{
\begin{remark}\rm 
$E_{KS}$ is not continuous with respect to the $C^2$-topology on the space of immersed submanifolds. For example, the value of $E_{KS}$ of a double covered sphere $S^m\sqcup S^m\to S^m$ is equal to $0$ since the combined angle $\cos_M(x,y)$ is always $0$, whereas the value of $E_{KS}$ of a pair of concentric spheres with radii $1$ and $1+\e$ $(\e\ne0)$ %, $S^m_{1+\e}(\vect0)\sqcup S^m_1(\vect0)$, 
tends to $+\infty$ as $\e$ approaches $0$. 
\end{remark}
}
\begin{lemma}\label{conf_angle}
Suppose $T_xM$ and $T_yM$ are parallel and with the same orientation. 
Let $v$ be a unit vector in the direction of $y-x$. 
Then the combined angle $\theta_M(x,y)$ satisfies 
\[
\cos\theta_M(x,y)=2{|\mbox{\rm pr}(v)|}^2-1,
\]
where $\mbox{\rm pr}:\RR^n\to T_xM$ is the orthogonal projection. 
\end{lemma}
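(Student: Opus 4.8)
The plan is to reuse the explicit reflection description of the oriented plane $\Pi_x=T_y\Sigma_x(y)$ that was established inside the proof of Lemma~\ref{conformal_angle_estimate}, and then to reduce the computation of $\cos\theta_M(x,y)$ to the eigenvalue count of Lemma~\ref{eigenvalues}. Since the combined angle is translation invariant, I would first normalise $x=\vect 0$. Let $e_1,\dots,e_m$ be a positively oriented orthonormal basis of $T_xM$, and set $\hat e_i=2\langle e_i,v\rangle v-e_i$, the reflection of $e_i$ in the line $\RR v$. The key geometric input, recorded in the proof of Lemma~\ref{conformal_angle_estimate}, is that $\hat e_1,\dots,\hat e_m$ is $(-1)^{m-1}$ times the positively oriented orthonormal basis $u_1,\dots,u_m$ of $\Pi_x=T_y\Sigma_x(y)$. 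I would stress that this identity is purely geometric — it uses only that $\Sigma_x(y)$ is the sphere tangent to $T_xM$ at $x$ through $y$, oriented to agree with $M$ at $x$ — and in particular does not need the graph hypothesis under which it was first stated.

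Next I would exploit the hypothesis of the lemma. Because $T_xM$ and $T_yM$ are parallel and carry the same orientation, the oriented plane $\Pi_y=T_yM$ is literally $T_xM$ with its given orientation, so the very same frame $e_1,\dots,e_m$ serves as a positively oriented orthonormal basis $v_1,\dots,v_m$ of $\Pi_y$. Feeding these choices into the definition \eqref{combined_angle} gives
\[
\cos\theta_M(x,y)=\det(\langle u_i,v_j\rangle)=(-1)^{m-1}\det(\langle \hat e_i,e_j\rangle).
\]
A direct expansion yields $\langle \hat e_i,e_j\rangle=2\langle e_i,v\rangle\langle e_j,v\rangle-\delta_{ij}$, and since $e_i\in T_xM$ one has $\langle e_i,v\rangle=\langle e_i,\mbox{\rm pr}(v)\rangle$; writing $\a_i=\langle e_i,\mbox{\rm pr}(v)\rangle$, the matrix $(\langle \hat e_i,e_j\rangle)$ is exactly $2(\a_i\a_j)-I$, which is the situation of Lemma~\ref{eigenvalues}(1).

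Finally I would apply Lemma~\ref{eigenvalues}(1): the eigenvalues of $(\a_i\a_j)$ are $|\mbox{\rm pr}(v)|^2,0,\dots,0$, so those of $2(\a_i\a_j)-I$ are $2|\mbox{\rm pr}(v)|^2-1,-1,\dots,-1$ and its determinant is $(-1)^{m-1}(2|\mbox{\rm pr}(v)|^2-1)$. Substituting back, the two factors of $(-1)^{m-1}$ cancel and leave $\cos\theta_M(x,y)=2|\mbox{\rm pr}(v)|^2-1$, as claimed. The only genuinely delicate point is the orientation bookkeeping — that the reflected frame $\hat e_i$ represents $(-1)^{m-1}$ times the positively oriented frame of $\Pi_x$, where the orientation of $\Pi_x$ is the one transported along the oriented sphere $\Sigma_x(y)$ rather than an arbitrary ambient choice; an error here would flip the sign of $\cos\theta_M$. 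I would lean on the proof of Lemma~\ref{conformal_angle_estimate} for this sign, and then verify the two extreme configurations as a check: when $y-x$ is tangent to $M$ (so $|\mbox{\rm pr}(v)|=1$) the formula gives $\theta=0$, and when $y-x$ is normal to $M$ (so $\mbox{\rm pr}(v)=\vect 0$) it gives $\cos\theta=-1$, both of which agree with the geometry and with the concentric-sphere example of the preceding remark.
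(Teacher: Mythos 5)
Your proposal is correct and follows essentially the same route as the paper's own proof: the paper likewise takes a common positively oriented orthonormal frame for $T_xM$ and $T_yM$, reflects it in the line through $x$ and $y$ to obtain $(-1)^{m-1}$ times the oriented frame of $\Pi_x=T_y\Sigma_x(y)$, and applies Lemma \ref{eigenvalues}(1) to the matrix $2(\langle u_i,v\rangle\langle u_j,v\rangle)-I$ to read off the determinant. Your write-up merely makes explicit the identification $\sum_i\langle u_i,v\rangle^2=|\mbox{\rm pr}(v)|^2$ and adds the sanity checks at the two extreme configurations.
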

\begin{proof} 
This can be proved in the same way as the previous lemma. 
Since $T_xM$ and $T_yM$ are parallel with the same orientation we can choose the same ordered orthonormal basis $u_1,\dots,u_m$ for both $T_xM$ and $T_yM$. 
Let $\hat u_i$ be a vector symmetric to $u_i$ with respect to the line through $x$ and $y$: $\hat u_i=2\langle u_i,v\rangle v-u_i,$ 
where $v=(y-x)/|y-x|$. 
Then $\hat u_1,\dots, \hat u_m$ is $(-1)^{m-1}$ times the ordered orthonormal basis of $\Pi_x=T_y\Sigma_x(y)$ that gives the positive orientation, which implies 
\[
\begin{array}{rcl}
\cos\theta_M(x,y)%&=&\big\langle u_1\wedge \dots \wedge u_m, (-1)^{m-1}\,\hat u_1\wedge \dots \wedge \hat u_m\big\rangle \\[2mm]
&=&\displaystyle (-1)^{m-1}\det(2\langle u_i,v\rangle\langle u_j,v\rangle-\langle u_i,u_j\rangle). \\[2mm]
%
%&=&\displaystyle (-1)^{m-1}\det\left(2(\langle u_i,v\rangle\langle u_j,v\rangle)_{1\le i,j\le m}-I\right),
\end{array}
\]
%where $I$ is the unit matrix. 
Lemma \ref{eigenvalues} (1) implies that the eigenvalues of $2(\langle u_i,v\rangle\langle u_j,v\rangle)_{1\le i,j\le m}-I$ are 
$2\sum_i\langle u_i,v\rangle^2-1,-1,\dots,-1$, which completes the proof. 
\end{proof}
\begin{theorem}\label{theorem2}
Kusner-Sullivan's energy $E_{KS}$, 
restricted to $\mathcal{M}(2,\e_0,b,+\infty)$ (cf. Definition \ref{def_M(e,b,V)}), 
is self-repulsive with respect to the $C^2$-topology. 
\end{theorem}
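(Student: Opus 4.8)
The plan is to follow the strategy of Theorem~\ref{theorem1}. Since the integrand of $E_{KS}$ is nonnegative there is no self-energy term to subtract (this is why the volume bound can be dropped, $V=+\infty$), so it suffices to bound $E_{KS}(M)$ from below by the interaction energy $\iint_{W_1\times W_2}(1-\cos\theta_M(x,y))^m\,|x-y|^{-2m}\,dx\,dy$ between the two sheets $W_1,W_2$ that are forced together near the double point $x'$ of $M'$, and to show that this quantity blows up. I would let $x_1\in W_1$ and $x_2\in W_2$ be the points sent close to $x'$ by $f$, set $s_0=|x_1-x_2|$ (which is controlled by $\delta$ and tends to $0$), and, using the curvature bound of $\mathcal{M}(2,\e_0,b,+\infty)$ together with the $C^2$-closeness to $M'$, express both sheets near $x_1,x_2$ as graphs over a common $m$-plane $T$, rewriting the interaction energy in these coordinates.

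The crux is a lower bound for the numerator $1-\cos\theta_M(x,y)$ with $x\in W_1$, $y\in W_2$, and here I would split according to the type of the double point. If the tangent planes at $x'$ are transversal, then since $T_y\Sigma_x(y)$ is $C^1$-close to $T_{x_1}W_1$ when $|x-y|$ is small, the combined angle stays bounded away from $0$, so $1-\cos\theta_M(x,y)\ge c_0>0$ uniformly. If the double point is tangential the tangent planes coincide and $1-\cos\theta_M(x,y)\to0$, and here Lemma~\ref{conf_angle} is the key tool: for parallel tangent planes it gives $1-\cos\theta_M(x,y)=2\bigl(1-|\mbox{\rm pr}(v)|^2\bigr)$ with $v=(y-x)/|y-x|$, that is, twice the squared norm of the normal component of $v$. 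Since the curvature bound and the $C^2$-closeness make the tangent planes along $W_1$ and $W_2$ nearly parallel, a perturbation of Lemma~\ref{conf_angle} yields $1-\cos\theta_M(x,y)\ge c\,\bigl(1-|\mbox{\rm pr}(v)|^2\bigr)$; writing $s(\eta)$ for the normal separation of the two graphs over $T$ and $u$ for the in-plane displacement between $x$ and $y$, this reads $1-\cos\theta_M(x,y)\gtrsim s^2/(|u|^2+s^2)$.

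Inserting these bounds, the interaction integrand is at least a constant times $|x-y|^{-2m}$ in the transversal case and at least a constant times $s^{2m}/(|u|^2+s^2)^{2m}$ in the tangential case. In the transversal case, integrating $|x-y|^{-2m}$ over the two nearly planar sheets at distance $s_0$ gives a logarithmically divergent quantity of order $\log(\e_0/s_0)$. In the tangential case, integrating first in the in-plane variable $u\in\RR^m$ (rescaling $u=sw$) produces a factor of order $s^{-m}$, and then integrating over the second sheet, using that the curvature bound forces $s(\eta)\lesssim s_0+b\,|\eta|^2$, yields a lower bound of order $s_0^{-m/2}$ up to a constant depending on $b$ and $m$. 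In both cases the bound tends to $+\infty$ as $s_0\to0$, and translating $s_0\to0$ into $\delta\to0$ via the $C^2$-closeness gives $E_{KS}(M)\to\infty$, which is the desired self-repulsiveness with respect to the $C^2$-topology.

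The main obstacle I expect is the tangential case: passing rigorously from the exact identity of Lemma~\ref{conf_angle} for parallel tangent planes to a quantitative lower bound $1-\cos\theta_M(x,y)\ge c\,\bigl(1-|\mbox{\rm pr}(v)|^2\bigr)$ for merely nearly parallel planes, with $c$ uniform over $\mathcal{M}(2,\e_0,b,+\infty)$, while simultaneously keeping the normal separation $s(\eta)$ small on a region of definite size so that the integrand stays large. Both controls rest essentially on the curvature bound $b$, which is precisely why the statement is restricted to $\mathcal{M}(2,\e_0,b,+\infty)$ and is consistent with the second half of the Conjecture above.
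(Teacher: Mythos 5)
Your overall strategy is the paper's: since the integrand of $E_{KS}$ is nonnegative, bound the energy from below by the interaction energy of the two sheets $W_1,W_2$ forced together at the double point, split into the transversal case ($\Pi_1\ne\Pi_2$) and the tangential case ($\Pi_1=\Pi_2$), and use Lemma \ref{conf_angle} as the key tool in the latter. Where you differ is the final estimate. You integrate a pointwise lower bound over the \emph{whole} sheets to extract explicit rates ($\log(\e_0/s_0)$, resp. $s_0^{-m/2}$), whereas the paper selects countably many mutually disjoint pairs of small balls at dyadic scales $2^{-j}$ (in the tangential case, balls of radius $\delta_j/10$ about $p_j=2^{-j}p_0$ on one sheet and about the in-plane offset point $q_j=p_j-\delta_j p_j/|p_j|$ on the other, $\delta_j$ being the normal separation at $p_j$), shows each pair contributes at least a fixed constant to the energy by scale invariance of the ratio in condition (b), and lets the number of available pairs go to infinity. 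Your route buys quantitative divergence rates; the paper's buys uniformity, because on each selected pair the in-plane and normal components of $y-x$ are comparable, so the angle bound is only ever needed where it is a strict constant.

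That trade-off is not cosmetic: both of your uniform pointwise lower bounds fail on exceptional configurations, which is exactly what the ball selection is designed to avoid. In the transversal case, $1-\cos\theta$ is \emph{not} bounded below on all of $W_1\times W_2$: already for two transversal $m$-planes one can choose $p\in\Pi_1$, $q\in\Pi_2$ with $q-p$ along the bisector of the two planes, and then the reflection of $\Pi_1$ in the line $pq$ is precisely $\Pi_2$ with matching orientation, so $\cos\theta_{\Pi_1\cup\Pi_2}(p,q)=1$. The vanishing set is lower-dimensional, so your logarithmic divergence survives, but only after restricting to regions such as the paper's $B_{r_j}(p_j)$, $B_{r_j}(q_j)$ with $p_0,q_0$ chosen off $\Pi_1\cap\Pi_2$. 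In the tangential case, the bound $1-\cos\theta_M(x,y)\ge c\,s^2/(|u|^2+s^2)$ cannot hold uniformly where $|u|\gg s$: Lemma \ref{conf_angle} is exact only for parallel tangent planes, and the error from the tilt of $T_xM$ and $T_yM$ (of order $b$ times the distance to $x'$, plus the $C^2$-discrepancy) swamps $s^2/|u|^2$ when $s$ is tiny. The repair is to keep only $|u|\lesssim s$, where $s^2/(|u|^2+s^2)\ge1/2$ and the perturbation is harmless; the $u$-integral over that region alone still yields your factor $s^{-m}$, so the conclusion stands. You flagged this as the main obstacle, and it is; the paper's offset balls of radius $\delta_j/10$ are precisely the restriction that closes it.
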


\begin{proof} 
\jb{Suppose a sequence of embedded compact submanifolds $M_k$ of class $C^2$ $(k\in\NN)$ converges to an immersed submanifold $M_{\jb{\infty}}$ in $\mathcal{M}(2,\e_0,b,+\infty)$ with a double point $x'$ with respect to the $C^2$-topology. 
}
%%%%%%%%%%%%%%%%%%%%%%%%%%%%%%%%%%%%%%%%%%%%%%%%%%%
%%%%%%%%%%% comment out starts here %%%%%%%%%%%%%%%
%%%%%%%%%%%%%%%%%%%%%%%%%%%%%%%%%%%%%%%%%%%%%%%%%%%
\if0 
Since %$E_{KS}$ is continuous with respect to $C^2$-topology and 
the integrand is non-negative, it is enough to show that if $M'$ is an immersed submanifold with double points then we can find a double point $x'$ such that for any positive number $C$ there are positive numbers $\delta_1,\delta_2$ $(\delta_1<\delta_2)$ such that $A_{\delta_1,\delta_2}=M'\cap(B^m_{\delta_2}(x')\setminus B^m_{\delta_1}(x'))$ satisfies 
\[
\iint_{A_{\delta_1,\delta_2}\times A_{\delta_1,\delta_2}}\frac{{(1-\cos\theta_{M'}(x,y))}^m}{{|x-y|}^{2m}}\,dxdy>C.
\]

Let $\mathcal{D}$ be the set of double points of $M'$, and let $x'$ be a point on the boundary of $\mathcal{D}$. 
\fi 
%%%%%%%%%%%%%%%%%%%%%%%%%%%%%%%%%%%%%%%%%%%%%%%%%%%
%%%%%%%%%%% comment out ends here %%%%%%%%%%%%%%%%%
%%%%%%%%%%%%%%%%%%%%%%%%%%%%%%%%%%%%%%%%%%%%%%%%%%%
%
Suppose %for some $\e_0>0$ 
we have $B_{\e_0}(x')\cap M_{\jb{\infty}}=W_{\jb{\infty,1}}\cup W_{\jb{\infty,2}}$ with $W_{\jb{\infty,i}}$ being diffeomorphic to $B^m$. 
Let $\Pi_i$ be $T_{x'}W_{\jb{\infty,i}}$ $(i=1,2)$. 
Assume that each $W_{\jb{\infty,i}}$ can be expressed as graph of a function $h_{\jb{\infty,i}}$ from a subset $U_{\jb{\infty,i}}$ of $\Pi_i$ to ${\jb{\Pi_i}}^\perp\cong\RR^{n-m}$. 
\jb{
Since $M_k$ converges to $M_\infty$, by taking a subsequence if necessary, we may assume that for each $k$ 
$B_{\e_0}(x')\cap M_k=W_{k,1}\cup W_{k,2}$ with $W_{k,i}$ being diffeomorphic to $B^m$, and each $W_{k,i}$ can be expressed as graph of a function $h_{k,i}$ from a subset $U_{k,i}$ of $\Pi_i$ to ${\Pi_i}^\perp$. 
}
We abuse the notation to express a point $(\eta,h_{\jb{k,i}}(\eta))$ in $\RR^n$ $(\eta\in U_{\jb{k,i}})$ by $h_{\jb{k,i}}(\eta)$ in what follows for the sake of simplicity \jb{$(k\in\NN\cup\{\infty\})$}. 
We use the coordinates of $\Pi_i$ so that $x'$ is the origin in what follows. 
Let $U_i$ be the interior of $\cap_{k=1}^\infty U_{k,i}$ $(i=1,2)$. 
\jb{By taking a subsequence of $\{M_k\}$ if necessary, and by taking $\e_0$ sufficiently small, we may assume the following. 
\begin{itemize}
\item For $i=1,2$ $U_i$ contains the origin. 
\item For any $k$ and for any multi-index $\mu$ with $|\mu|\le2$ there hold 
%$|\partial^\mu h_{k,1}(p)-\partial^\mu h_{\infty,1}(p)|<2^{-k}$ for any $p\in U_1$ and $|\partial^\mu h_{k,2}(q)-\partial^\mu h_{\infty,2}(q)|<2^{-k}$ for any $q\in U_2$. 
\begin{equation}\label{closeness}
\begin{array}{rl}
\displaystyle |\partial^\mu h_{k,1}(p)-\partial^\mu h_{\infty,1}(p)|<2^{-k} &\qquad \forall p\in U_1, \\[1mm]
\displaystyle |\partial^\mu h_{k,2}(q)-\partial^\mu h_{\infty,2}(q)|<2^{-k} &\qquad \forall q\in U_2. 
\end{array}
\end{equation}
\end{itemize}
We remark that \eqref{closeness} implies that $W_{\jb{k,i}}$ is sufficiently close to $U_i$ with respect to the $C^1$-topology for a sufficiently large $k$ $(i=1,2)$. %\jb{for any $k$ $(k\in\NN\cup\{\infty\})$}. 
}

We show that for any positive number $C$ there are natural numbers \jb{$k_0$ and} $l$, and sets of mutually disjoint subsets %$\{V^i_{j}\}_{i=1,2, \,1\le j\le l}$ of $U_i$
\jb{$\{V^1_{k,j}\}_{1\le j\le l}$ of $U_1$ and $\{V^2_{k,j}\}_{1\le j\le l}$ of $U_2$ for each $k$} 
such that \jb{if $k\ge k_0$ then }
%
%\begin{enumerate}
%\item $h_{\jb{\infty,1}}(V_j^1), h_{\jb{\infty,2}}(V_{j'}^2)$ $(1\le j,j'\le l)$ are mutually disjoint.
%
%\item $\displaystyle \sum_{j=1}^l\int_{h_{\jb{\infty,1}}(V_j^1)}\int_{h_{\jb{\infty,2}}(V_j^2)}\frac{{(1-\cos\theta_{M_{\jb{\infty}}}(x,y))}^m}{{|x-y|}^{2m}}\,dxdy>C.$
%
\begin{equation}\label{last_eq}
\sum_{j=1}^l\int_{h_{\jb{k,1}}(V_{\jb{k,j}}^1)}\int_{h_{\jb{k,2}}(V_{\jb{k,j}}^2)}\frac{{(1-\cos\theta_{M_{\jb{k}}}(x,y))}^m}{{|x-y|}^{2m}}\,dxdy>C,
\end{equation}
%
%\end{enumerate}
%
\jb{which implies that $E_{KS}(M_k)$ tends to $+\infty$ as $k$ goes to $+\infty$ since the integrand of $E_{KS}$ is non-negative. }

To satisfy the condition \jb{\eqref{last_eq}}, we look for $V^i_{\jb{k,j}}$ $(i=1,2,\,\jb{k,} j\in\NN)$ so that we can find a natural number $k_0$ and positive constants $c_2,c_3$ such that \jb{if $k\ge k_0$ then the following hold. }
\begin{enumerate}
\item[(a)] If $x\in h_{\jb{k,1}}(V_{\jb{k,1}}^1)$ and $y\in h_{\jb{k,2}}(V_{\jb{k,1}}^2)$ then $1-\cos\theta_{M_{\jb{k}}}(x,y)\ge c_2$ for any $j$, 
\item[(b)] $\displaystyle {\left(\max_{x\in h_{\jb{k,1}}(V_{\jb{k,1}}^1),\,y\in h_{\jb{k,2}}(V_{\jb{k,1}}^2)}|x-y|\right)}^{-2m}\,\mbox{Vol}\left(h_{\jb{k,1}}(V_{\jb{k,1}}^1)\right)\mbox{Vol}\left(h_{\jb{k,2}}(V_{\jb{k,1}}^2)\right)\ge c_3$ for any $j$. 
\end{enumerate}

This can be done as follows. 

\smallskip
Case 1. Suppose $W_{\jb{\infty,1}}$ and $W_{\jb{\infty,2}}$ are not tangent \jb{at $x'$}, i.e. $\Pi_1\ne \Pi_2$. 
\jb{In this case we can take $V_{k,j}^i$ independent of $k$, which we denote by $V_j^i$, as follows.} 
One can find a pair of points $p_0\in \Pi_1\setminus(\Pi_1\cap\Pi_2)$ and $q_0\in\Pi_2\setminus(\Pi_1\cap\Pi_2)$ (cf. Fig. 1) and 
a positive constant $c_2$ such that 
if we put $r_0=\min\{|p_0|, |q_0|\}/10$, $V_0^1=B_{r_0}(p_0)\cap\Pi_1$ and $V_0^2=B_{r_0}(q_0)\cap\Pi_2$ then $V_0^1\cap V_0^2=\emptyset$ and for any $p\in V_0^1$ and $q\in V_0^2$ 
there holds $1-\cos\theta_{\Pi_1\cup\Pi_2}(p,q)>2c_2$. 
Put $p_j=2^{-j}p_0$, $q_j=2^{-j}q_0$, $r_j=2^{-j}r_0$, $V_j^1=B_{r_j}(p_j)\cap\Pi_1$ and $V_j^2=B_{r_j}r(q_j)\cap\Pi_2$. 
Remark that for any $j$ and for any $p\in V_j^1$ and $q\in V_j^2$ there holds $1-\cos\theta_{\Pi_1\cup\Pi_2}(p,q)>2c_2$ 
since $\Pi_1$ and $\Pi_2$ are affine planes and the pictures are homothetic. 
One can take natural numbers \jb{$k_0$ and} $j_0$ such that for any \jb{$k$ and} $j$ with \jb{$k\ge k_0$ and} $j\ge j_0$ and for any $p\in V_j^1$ and $q\in V_j^2$ we have 
$|\theta_{\Pi_1\cup\Pi_2}(p, q)-\theta_{M_{\jb{k}}}(h_{\jb{k,1}}(p),h_{\jb{k,2}}(q))|<c_2$, which implies the condition (a) above. 
The choice of $r_0$ implies that $V_j^1$ and $V_j^2$ $j\in\NN$ are mutually disjoint. 
%Since $h_1(V_j^1)$ and $h_2(V_{j'}^2)$ are sufficiently close to $V_j^1$ and $V_j^2$ with respect to the $C^1$-topology, (1) follows. 
Finally, %\jb{by taking a subsequence of $M_k$ if necessary, we may assume that $|h_{k,1}(p)-h_{\infty,1}(p)|<2^{-k}$ and $|h_{k,2}(q)-h_{\infty,2}(q)|<2^{-k}$ in a neighbourhood of $x'$. Then} 
(b) follows from 
\[
\begin{array}{l}
\mbox{Vol}\left(h_{\jb{k,i}}(V_j^i)\right)\ge \mbox{Vol}\left(V_j^i\right) = 2^{-jm}r_0^m\mbox{Vol}(B^m) \quad \jb{\forall k}, \,i=1,2,\\[1mm]
|p-q|\le 2^{-j}(|p_0-q_0|+2r_0) \quad \forall p\in V_j^1, \forall q\in V_j^2, \, \forall j, \\[1mm]
|h_{\jb{k,1}}(p)-p|\le \frac12 b|p|^2\jb{+2^{-k}}=2^{-2j-1}b(|p_0|+|r_0|)^2\jb{+2^{-k}} \quad \forall p\in V_j^1, \, \forall j, \jb{\, \forall k,} \\[2mm]
|h_{\jb{k,2}}(q)-q|\le 2^{-2j-1}b(|q_0|+|r_0|)^2\jb{+2^{-k}} \quad \forall q\in V_j^2, \, \forall j, \jb{\, \forall k}.
\end{array}
\]

\begin{figure}[htbp]
\begin{center}
\begin{minipage}{.36\linewidth}
\begin{center}
%%%%%%%%%%%%%%%%%%%%%%%%%%%%%%%%%%%%%%%%%%%%%%%%%%%
%%%%%%%%%%% comment out starts here %%%%%%%%%%%%%%%
%%%%%%%%%%%%%%%%%%%%%%%%%%%%%%%%%%%%%%%%%%%%%%%%%%%
\if0 
\psfrag{n}{$\Omega_1$}
\psfrag{z}{$M_0$}
\psfrag{t}{$\Omega_2$}
\psfrag{o}{$M_{1,o}$}
\psfrag{i}{$M_{1,i}$}
\psfrag{p}{$M_{2,o}$}
\psfrag{j}{$M_{2,i}$}
\fi 
%%%%%%%%%%%%%%%%%%%%%%%%%%%%%%%%%%%%%%%%%%%%%%%%%%%
%%%%%%%%%%% comment out ends here %%%%%%%%%%%%%%%%%
%%%%%%%%%%%%%%%%%%%%%%%%%%%%%%%%%%%%%%%%%%%%%%%%%%%
%\includegraphics[width=\linewidth]{../Rieszエネルギー/留数・外球・内在的体積/iea.eps}
\includegraphics[width=\linewidth]{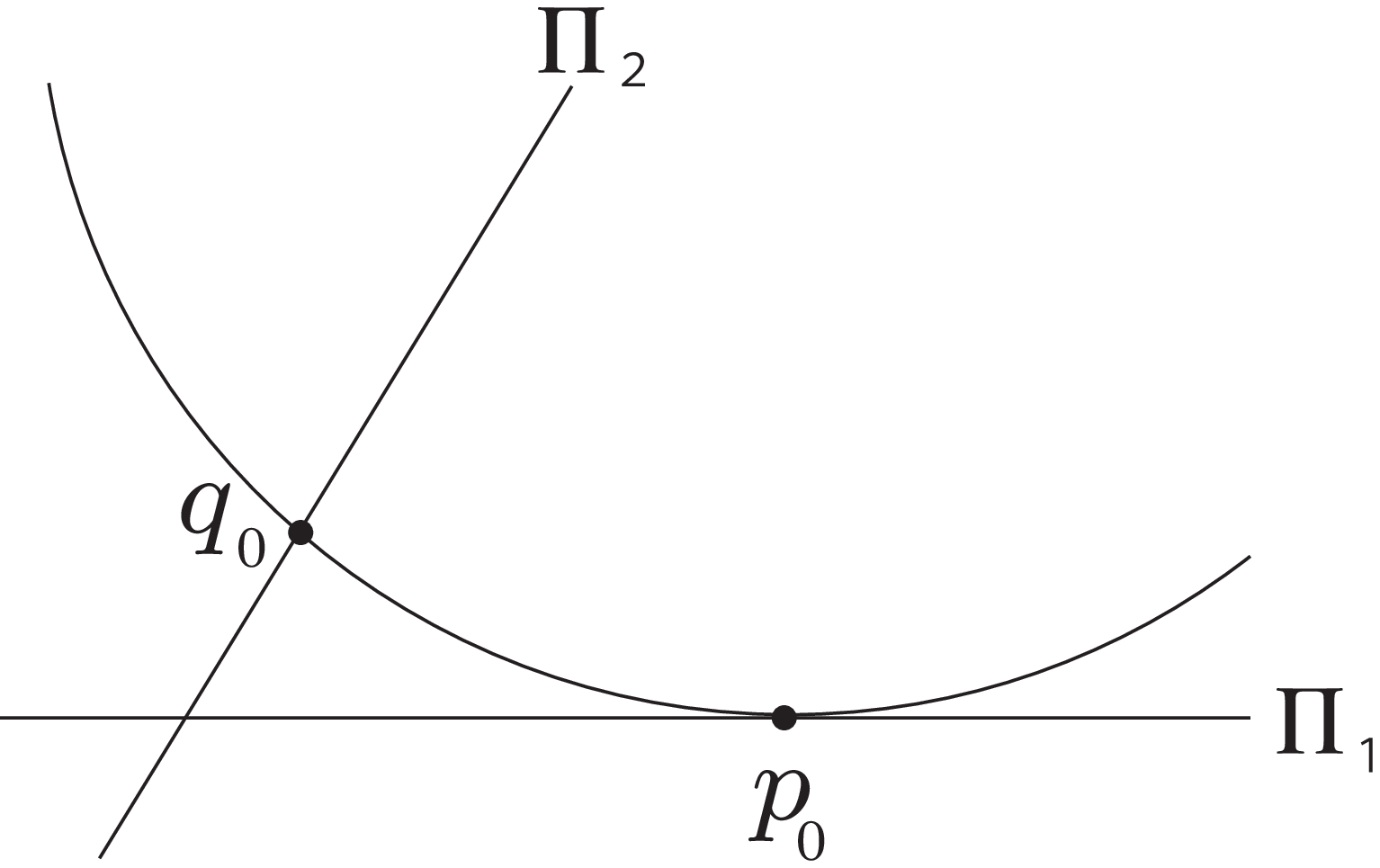}
\caption{Case 1}
\label{transverse}
\end{center}
\end{minipage}
\hskip 0.6cm
\begin{minipage}{.5\linewidth}
\begin{center}
\includegraphics[width=\linewidth]{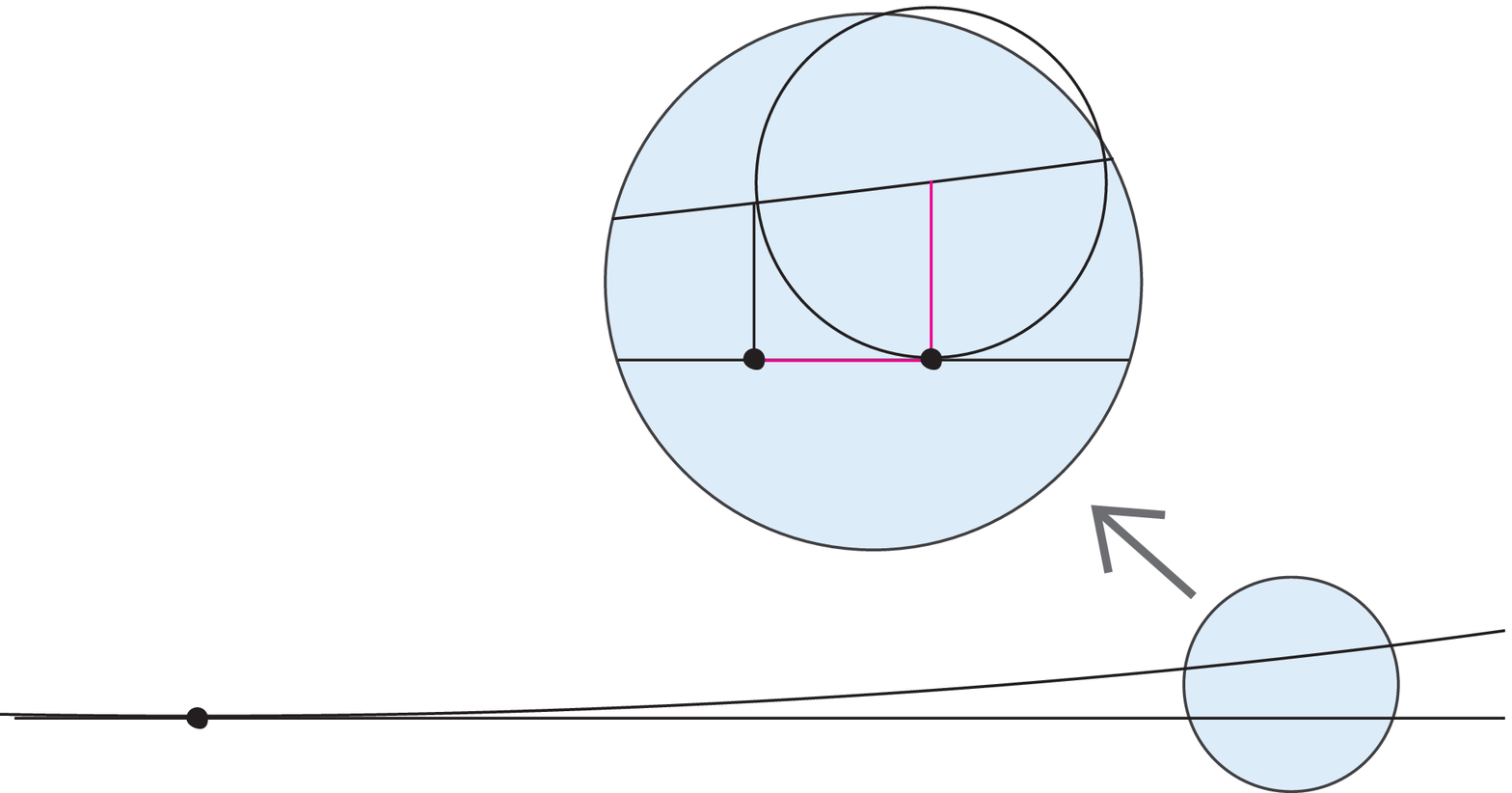}
\caption{Case 2. A picture when $h_{\jb{k,}2}\equiv0$}
\label{tangent}
\end{center}
\end{minipage}
\end{center}
\end{figure}

\smallskip
Case 2. Suppose $W_{\jb{\infty,2}}$ and $W_{\jb{\infty,2}}$ are tangent \jb{at $x'$}, i.e. $\Pi_1= \Pi_2$, which we denote by $\Pi$. 
Put $U=U_1\cap U_2\cap \stackrel\circ B_{\e_1}(\vect 0)$, where $\stackrel\circ B_{\e_1}(\vect 0)$ is an open ball with center the origin and the radius $\e_1$ $(0<\e_1\le\e_0)$. %, and 
%%%%%%%%%%%%%%%%%%%%%%%%%%%%%%%%%%%%%%%%%%%%%%%%%%%
%%%%%%%%%%% comment out starts here %%%%%%%%%%%%%%%
%%%%%%%%%%%%%%%%%%%%%%%%%%%%%%%%%%%%%%%%%%%%%%%%%%%
\if0 
\[
O=\{\eta\in U\subset \Pi\,:\,(h_1-h_2)(\eta)\ne\vect0\}.
\]
Since $x'$ is not in the interior of the set of double points $\mathcal{D}$, the closure of $O$ contains $\vect 0$. 
Since $O$ is open it can be given as the union of open balls $O=\cup_{\a\in A}O_\a$. 
Choose a suffix $\a_0$ such that the closure of $O_{\a_0}$ contains $\vect 0$. 
\fi 
%%%%%%%%%%%%%%%%%%%%%%%%%%%%%%%%%%%%%%%%%%%%%%%%%%%
%%%%%%%%%%% comment out ends here %%%%%%%%%%%%%%%%%
%%%%%%%%%%%%%%%%%%%%%%%%%%%%%%%%%%%%%%%%%%%%%%%%%%%
%
By choosing $\e_1$ sufficiently small \jb{and by taking a subsequence of $M_k$ if necessary}, we may assume that $T_{h_{\jb{k,i}}(\eta)}W_{\jb{k,i}}$ $(i=1,2, \eta\in \jb{U})$ are almost parallel to $\Pi$ \jb{for any $k$}. 
Let $p_0$ \jb{$(p_0\ne\vect 0)$} be \jb{a point in $U$}. %the center of $O_{\a_0}$. 
Put for $j\in \NN$ 
\[
\begin{array}{c}
p_j=\displaystyle 2^{-j}p_0, \quad 
\delta_{\jb{k,}j}=\displaystyle |(h_{\jb{k,1}}-h_{\jb{k,2}})(p_j)|, \quad 
q_{\jb{k,}j}=\displaystyle p_j-\delta_{\jb{k,}j}\frac{p_j}{|p_j|} \quad \mbox{(cf. Fig. 2)}, \\[2mm]
V_{\jb{k,}j}^1=\displaystyle B_{\delta_{\jb{k,}j}/10}(p_j), \quad 
V_{\jb{k,}j}^2=\displaystyle B_{\delta_{\jb{k,}j}/10}(q_{\jb{k,}j}).
\end{array}
\]
We remark that $\delta_{\jb{k,}j}\le2\cdot\frac12b|p_j|^2\jb{+2\cdot 2^{-k}}=2^{-2j}b|p_0|^2\jb{+2^{1-k}}$, hence by choosing $\e_1$ sufficiently small, 
\jb{we can find a natural number $k_1$ such that if $k\ge k_1$ then} 
$V_{\jb{k,}j}^1$ $(j\in\NN)$ are mutually disjoint and $V_{\jb{k,}j}^2$ $(j\in\NN)$ are also mutually disjoint. %, which implies the condition (1). \\ \indent

If $h_{\jb{k,1}}(V_{\jb{k,}j}^1)$ and $h_{\jb{k,2}}(V_{\jb{k,}j}^2)$ are parallel to $\Pi$ with \jb{the difference in $\Pi^\perp$ being equal to} $\jb{\delta=}\delta_{\jb{k,}j}$ then \jb{by} Lemma \ref{conf_angle} \jb{we have}
\[
\jb{1-\cos\theta_{M_{\jb{k}}}(x,y)
\ge 1-\left(2\frac{{(\delta+\frac2{10}\delta)}^2}{\delta^2+{(\delta+\frac2{10}\delta)}^2}-1\right)=\frac{50}{61} \qquad \forall x\in h_{\jb{k,1}}(V_{\jb{k,}j}^1), \, \forall y\in h_{\jb{k,2}}(V_{\jb{k,}j}^2),}
\]
%
%which} implies that $1-\cos\theta_{M_{\jb{k}}}(x,y)\ge50/61$ for any $x\in h_{\jb{k,1}}(V_{\jb{k,}j}^1)$ and $y\in h_{\jb{k,2}}(V_{\jb{k,}j}^2)$, 
\noindent
which guarantees the existence of a positive constant $c_2$ of (a) \jb{in a general case since $\cos_{M_k}(x,y)$ is continuous with respect to the $C^1$-topology}. 
Under the same assumption, \jb{i.e. if $h_{\jb{k,1}}(V_{\jb{k,}j}^1)$ and $h_{\jb{k,2}}(V_{\jb{k,}j}^2)$ are parallel to $\Pi$,} we have $|x-y|\le(\sqrt{61}/5)\delta_{\jb{k,}j}$, which, together with $\mbox{Vol}\left(h_{\jb{k,i}}(V_{\jb{k,}j}^i)\right)\ge(\delta_{\jb{k,}j}/10)^m\mbox{Vol}(B^m)$ $(i=1,2)$, guarantees the existence of a positive constant $c_3$ of (b) \jb{in a general case because of the continuity with respect to the $C^1$-topology}. 
\end{proof}

We remark that even in the case of knots we used the bound of length and curvature to show the self-repulsiveness for technical reason (cf. \cite{LO} Theorem 5.7). 

\bigskip

\end{document}